\documentclass{article}



\usepackage{amsfonts}
\usepackage{amsmath,amsthm,amssymb}
\usepackage{graphicx}
\usepackage{tikz-qtree}
\usepackage[caption=false]{subfig}
\usepackage{hyperref}
\usepackage{cleveref}
\usepackage{geometry}

\usepackage{epstopdf}
\usepackage{algorithmic}
\ifpdf
  \DeclareGraphicsExtensions{.eps,.pdf,.png,.jpg}
\else
  \DeclareGraphicsExtensions{.eps}
\fi

\usepackage{color}
\definecolor{red}{rgb}{1.0,0.,0.}
\newcommand{\todo}[1]{\textcolor{red}{[[#1]]}}


\newtheorem{remark}{Remark}
\newtheorem{lemma}{Lemma}
\newtheorem{theorem}{Theorem}
\newtheorem{corollary}{Corollary}
\newtheorem{definition}{Definition}

\title{A path integral method for solution of the wave equation with continuously-varying coefficients}

\author{Jithin D. George\thanks{Department of Engineering Sciences and Applied Mathematics, Northwestern University (\href{jithindgeorge93@gmail.com})}
\and David I. Ketcheson\thanks{Computer, Electrical, and Mathematical Sciences \& Engineering Division,
King Abdullah University of Science and Technology, 4700 KAUST, Thuwal
23955, Saudi Arabia. (\href{david.ketcheson@kaust.edu.sa})}
\and Randall J. LeVeque\thanks{Department of Applied Mathematics, University of
Washington, Seattle, WA 98195-3925. 
(\href{rjl@uw.edu})}
}

\usepackage{amsopn}

\DeclareMathOperator{\sech}{sech}
\DeclareMathOperator{\Vol}{Vol}

\newcommand{\ignore}[1]{}
\newcommand{\eqn}[1]{(\ref{#1})}

\newcommand{\bx}{{\mathbf x}}
\newenvironment{mat}{\left[ \begin{array}{ccccccccccccc}}{\end{array}\right]}
\newenvironment{rmat}{\left[ \begin{array}{rrrrrrrrrrrrr}}{\end{array}\right]}
\newcommand\bcm{\begin{mat}}
\newcommand\ecm{\end{mat}}
\newcommand\brm{\begin{rmat}}
\newcommand\erm{\end{rmat}}

\newcommand\Paths{\mathcal P}
\newcommand\traveltime{\tau}

\newcommand{\Xset}{{\mathcal X}}
\newcommand{\Bset}{{\mathcal B}}
\newcommand{\Real}{{\mathbb R}}

\newcommand{\Ct}{C_T}
\newcommand{\Cr}{C_R}




\newcommand{\lef}{-}
\newcommand{\righ}{+}
\newcommand{\wlim}{\overline{w}}
\newcommand{\plim}{\overline{p}}
\newcommand\new[1]{#1}

\begin{document}

\maketitle

\begin{abstract}
A new method of solution is proposed for solution of the wave equation
in one space dimension \new{with continuously-varying coefficients.
By considering all paths along which information arrives at a given
point, the solution is expressed as an infinite series of integrals,
where the integrand involves only the initial data and the PDE coefficients.
Each term in the series represents the influence of paths with a fixed
number of turning points.}
We prove that the series
converges and provide bounds for the truncation error.
The effectiveness of the approximation is illustrated with examples.
We illustrate an interesting combinatorial connection between the traditional
reflection and transmission coefficients for a sharp interface, and Green's coefficient
for transmission through a smoothly-varying region.  \end{abstract}

\section{Introduction and physical setting}
\label{sec:intro}
We consider the Cauchy problem for the linear one-dimensional wave equation
\begin{align}
    u_{tt} & = \frac{1}{\rho(x)}\left(K(x) u_x(x,t)\right)_x,
\end{align}
which can also be written in first-order form as
\begin{equation}\label{acoustics}
\begin{split}
    p_t(x,t) + K(x) u_x(x,t) & = 0 \\
    u_t(x,t) + \frac{1}{\rho(x)}p_x(x,t) & = 0.
\end{split}
\end{equation}
Here we have used the notation of acoustics: $p$ is pressure, $u$ is velocity,
$K$ is the bulk modulus, and $\rho$ is the density.
Linear wave equations with the same mathematical structure arise in many other
applications, with different interpretations of the material parameters, such
as elasticity, electromagnetics, and linearized fluid dynamics or water waves.
If the coefficients $(\rho(x), K(x))$ are constant or piecewise-constant,
the problem may be solved exactly by the method of characteristics.
On the other hand, for more general functions $\rho(x)$ and/or $K(x)$
the method of characteristics does not substantially simplify the problem
since the solution varies at every point along a characteristic in a way
that is coupled to other characteristics.

In this work we propose and demonstrate a method for approximately solving the general
Cauchy problem for \eqref{acoustics} in the presence of arbitrary variation
in $\rho$ and $K$, by grouping characteristic paths according to the number of reflections.
Our interest originated in a study of the shoaling of water waves over a continental
shelf and our complementary works \cite{JGeorgeMS,shoalingpaper} contain
more discussion of this application and several illustrative examples
using the linearized shallow water equations, a special case that is
also discussed in Remark \ref{remark:sw} below.
\new{Code to reproduce the numerical experiments in this paper is available online.\footnote{\url{https://github.com/ketch/characteristics_rr}}}

\new{We focus on scattering of a localized pulse or front in a medium
including a region of continuous variation.
In this setting the standard analytical tool, the method of characteristics,
leads to a system of ordinary differential equations \cite{lax2006hyperbolic}.
In practice, to compute the solution at a single point from this expression, one
must discretize the infinite system of ODEs and then solve them numerically.
Herein we derive an expression for the solution at any point in terms of
just a multidimensional integral.
In practice this gives a method that is much more analytical
than the classical method of characteristics, since the resulting expression
only requires the evaluation of integrals (rather than the solution
of an infinite system of ODEs).  Our goal is not to provide an efficient computational
method for \eqref{acoustics}, but rather a semi-analytical tool that may provide
insight into solutions.

Among the vast literature on solutions of the wave equation in heterogeneous
media there are other approaches that bear some relation to ours.  In most cases,
a piecewise-constant approximation is used; a
combinatorial solution for scattering from arbitrary piecewise-constant media
was developed in \cite{gibson2014combinatorics,gibson2019disk}. For an approach similar to
ours but in the setting of time-harmonic solutions, see
\cite{bremmer1951wkb,landauer1951reflections,schelkunoff1951remarks}.

The method developed in this paper results in an approximation
series that has some similarity with the Born scattering series
used in seismic imaging \cite{weglein2003inverse,innanen2009born}.
Each term of the Born series arises as a perturbation expansion of
the Green's function solution depending on transmission or reflection
at any point, while in our series each term arises solely from
paths involving a particular number of turning points in a
heterogeneous medium. The goal of forward seismic imaging is often
to get a primary approximation of waves that have only suffered one
reflection or to remove the effect of multiply reflected waves. So,
methods developed to approximate primary waves such as in
\cite{innanen2008direct} and multiple removal algorithms like that
explored in \cite{innanen2009born} bear some visual and physical
similarities with the integrals developed in this paper.}

In the remainder of this section we briefly review the mathematics of characteristics and reflection
in one dimension.

\subsection{The method of characteristics: homogeneous media}
Defining $q = [p,u]^T$, the system \eqref{acoustics} can be written as $q_t + A(x) q_x = 0$,
where $A$ has the eigenvalue decomposition $A=V(x)\Lambda V^{-1}(x)$ with
\begin{align}
    V(x) & = \begin{bmatrix} 1 & 1 \\ \frac{-1}{Z(x)} & \frac{1}{Z(x)} \end{bmatrix} &
    \Lambda(x) & = \begin{bmatrix} -c(x) & 0 \\ 0 & c(x) \end{bmatrix}.
\end{align}
Here $Z(x)=\sqrt{K(x)\rho(x)}$ is known as the impedance and $c(x)=\sqrt{K(x)/\rho(x)}$
is the sound speed.
If $K(x)$ and $\rho(x)$ are constant (or more generally, if $Z(x)$ is constant)
then $V(x)$ is also constant and, setting $w(x,t)=V^{-1}q(x,t)$,
\eqref{acoustics} can be rewritten as \begin{equation} \label{advection}
    w_t + \Lambda(x) w_x = 0.
\end{equation}
System \eqref{advection} consists of two decoupled advection equations,
indicating that one component of the solution ($w_1$) travels to the left
(with velocity $-c$) while the other ($w_2$) travels to the right (with velocity $+c$).
Lines of constant $x+ct$ and $x-ct$ are referred to as characteristics.
The solution is simply the sum of the components transmitted along the
two characteristic families:
\begin{align} \label{simple-characteristics-solution}
    p(x,t) = w_1(x+ct,0) + w_2(x-ct,0).
\end{align}
%

\subsection{Piecewise-constant media: reflection and transmission}\label{subsection:piecewise}
The method of characteristics can also be used to find the exact solution of
\eqref{acoustics} if $K(x)$ and $\rho(x)$ are piecewise-constant functions.  Within each
constant-coefficient domain the characteristic velocities are $\pm c(x)$.
Consider a single interface where
the impedance jumps from $Z_-$ on the left to $Z_+$ on the right.
Let $v_1^\pm, v_2^\pm$ denote the respective columns of $V(0^\pm)$.
For an incident right-going wave, the incident ($p_0$),
transmitted ($p_T$), and reflected ($p_R$) wave pressures are related by
\begin{align} \label{transreflsys}
    p_0 v_2^+ & = p_T v_2^+ + p_R v_1^-.
\end{align}
Solving system \eqref{transreflsys} reveals that the transmitted and reflected
waves are related to the incident wave by the transmission and reflection
coefficients:
\begin{subequations} \label{discontTR}
\begin{align} \label{Ctrans}
    \Ct(Z_-,Z_+) & := \frac{p_T}{p_0} =  \frac{2Z_+}{Z_- + Z_+}, \\
    \Cr(Z_-,Z_+) & := \frac{p_R}{p_0} = \frac{ Z_+-Z_-}{Z_-+ Z_+}. \label{Crefl}
\end{align}
\end{subequations}

\subsection{Smoothly-varying media}
Wherever the impedance $Z(x)$ is not constant, the system \eqref{acoustics} cannot
be decoupled as in \eqref{advection} because the matrix $V(x)$ that relates $q$ and $w$
varies in space.  If $Z(x)$ is differentiable, we have
$w_x = (V(x)^{-1}q)_x=V^{-1}(x)q_x + (V^{-1}(x))'q$ and we obtain
instead of \eqref{advection} the system
\begin{equation}\label{advection-coupled}
\begin{split}
    w_t + \Lambda(x) w_x & = (V^{-1})' q \\
                         & = (V^{-1})' V w.
\end{split}
\end{equation}
Here
\begin{align}
(V^{-1}(x))'V(x) & = \frac{1}{2} \frac{Z'(x)}{Z(x)} \left[ \begin{array}{rr} 1 & -1 \\ -1 & 1 \end{array} \right].
\end{align}
\new{Since the left-hand side of \eqref{advection-coupled} is decoupled, we can write
\eqref{advection-coupled} as a system of ODEs using a simple coordinate transformation,
and the solution of this system can be shown to be that of the original PDE
(see e.g. \cite[Section 4.1]{lax2006hyperbolic}).  Using this approach, to find
the solution $p(x,t)$ at a single point still requires solving an infinite number of
ODEs.  In this work we derive a semi-analytical method that only requires computing
an iterated integral for each point value of the solution.}

We see from \eqref{advection-coupled} that information is still transmitted
along characteristics, but the amplitude of each component is modified by the source
terms that couple the characteristic variables through reflection.
The coefficient
\begin{align} \label{inf-refl-coeff}
    r(x) = \frac{Z'(x)}{2Z(x)}
\end{align}
gives the amplitude of these reflections and we refer to it
as the {\em infinitesimal reflection coefficient}.

The infinitesimal reflection ceofficient $r(x)$ is related to the traditional
reflection coefficient $R(Z_-,Z_+)$; if we take $Z(x)$ to be a continuous
function with value $Z_-$ at $x$ and  value $Z_+$ at $x+\Delta x$, the ratio
$R/\Delta x$ approaches $r(x)$ as $\Delta x$ tends to zero:
\begin{equation}
\frac{R}{\Delta x} \approx \frac{1}{\Delta x}\frac{Z(x+\Delta x) -
Z(x)}{Z(x+\Delta x) +
Z(x)} \approx \frac 1 2 \frac{Z'(x)}{Z(x)}.
\end{equation}


\begin{figure}
\begin{centering}
\subfloat[Homogeneous medium]{ \includegraphics[width=2in]{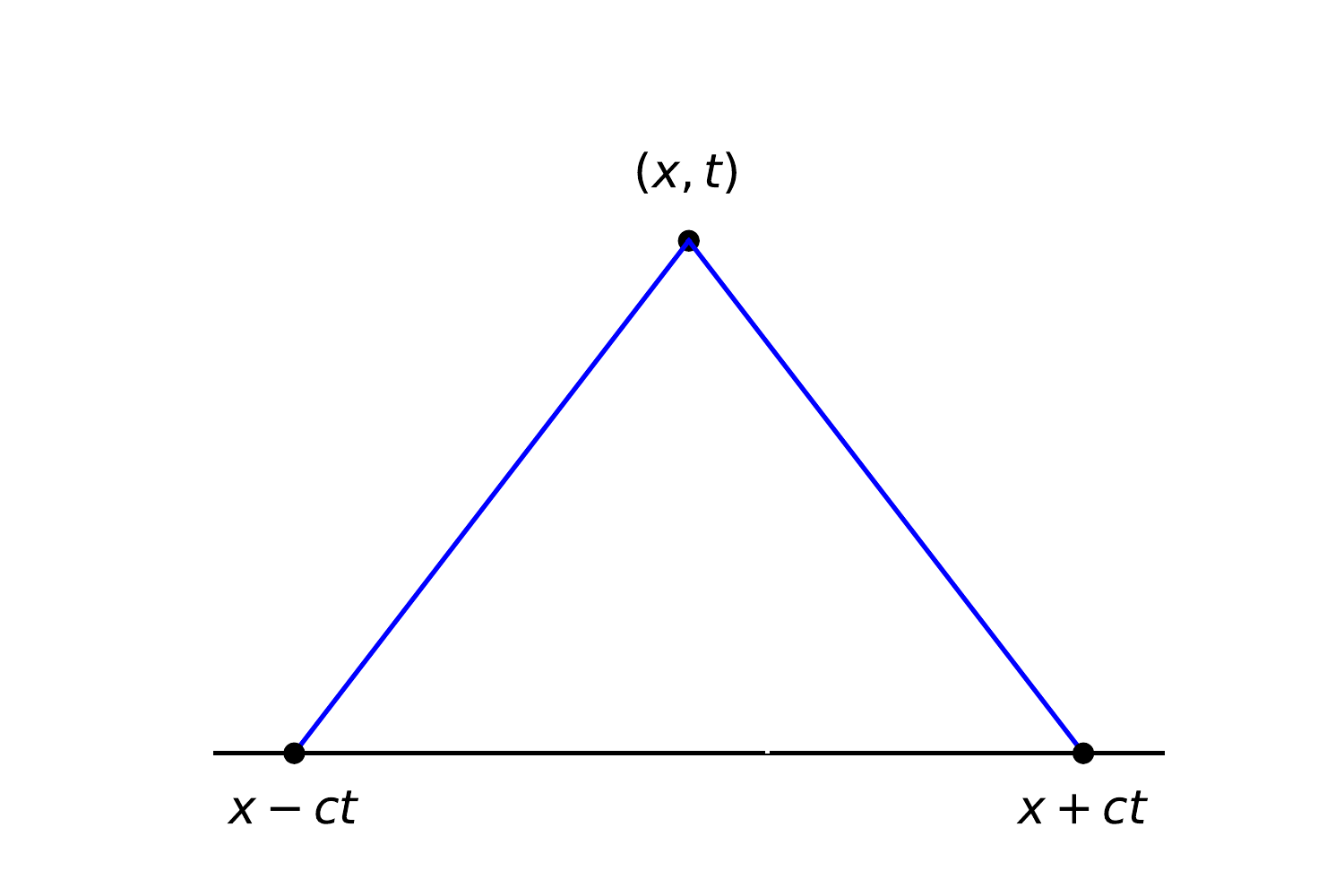}}
\subfloat[Piecewise-constant medium]{ \includegraphics[width=2in]{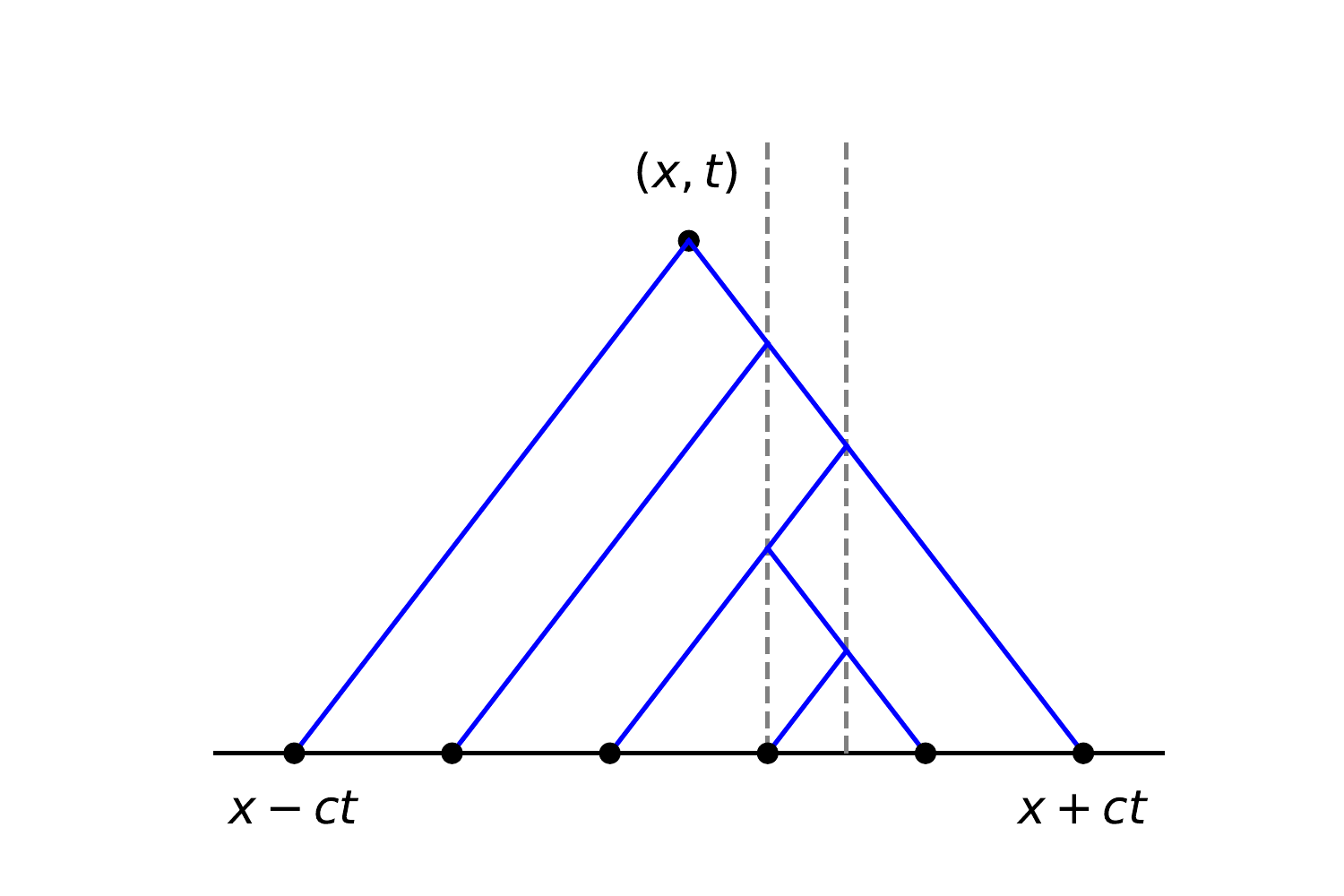}}
\subfloat[Continuously-varying medium]{ \includegraphics[width=2in]{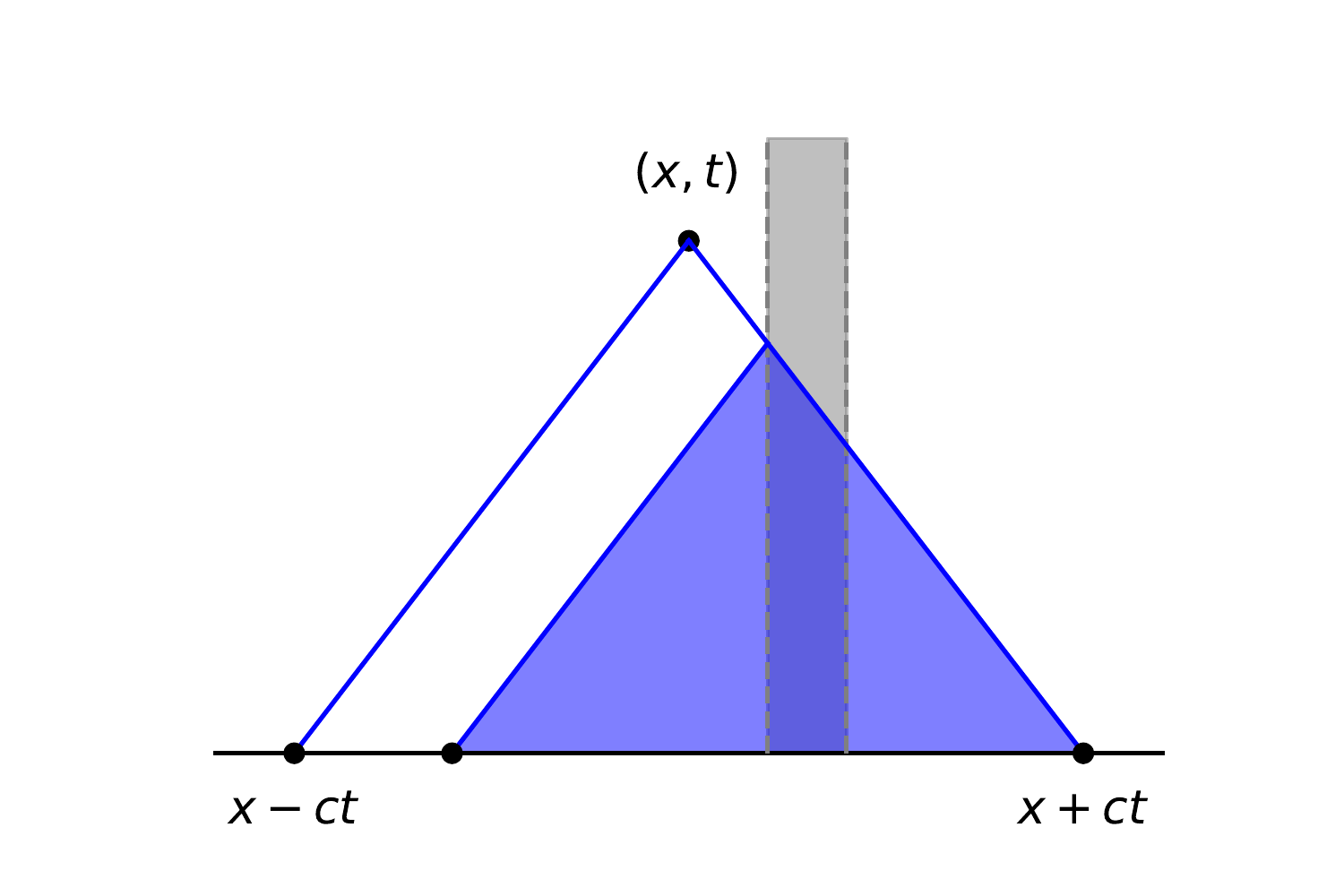}}
\caption{Characteristics in three different types of media.  In the homogeneous medium,
the solution at each point is determined by just two characteristics.  In the piecewise-constant
medium (with material interfaces indicated by dashed lines), the solution at
each point is determined by a finite number of characteristics.
In the continuously-varying medium (with $Z(x)$ varying throughout the grey-shaded region), the
solution at the indicated point depends on all characteristics within the blue-shaded region.
\label{fig:media}}
\end{centering}
\end{figure}

Characteristics for each of the three classes of media just discussed are illustrated
in Figure \ref{fig:media}.  We see that
in the presence of constant or piecewise-constant impedance, the number of
characteristics that must be accounted for to compute the solution at a given
point is finite.  On the other hand, if
$Z(x)$ varies continuously then there are in general infinitely
many characteristics that influence a given point.
The technique developed in the rest of this work is based on the hypothesis
that the dominant contributions to the solution come from accounting for
paths with relatively few reflections.  Here a path is a continuous, piecewise
smooth curve in the $x-t$ plane such that each smooth part follows a
characteristic and $Z'(x)\ne0$ at each point of non-smoothness.  This hypothesis
is clearly reasonable when $|r(x)|<1$, since then each reflection must
diminish the significance of the corresponding characteristic path.
The motivation for this hypothesis more generally is given in Section
\ref{sec:convergence}.

\section{Characteristics in continuously-varying media}
In this section we develop an approximate solution to \eqref{acoustics}
in the form of an infinite series.
We focus on the case of a finite region of variation in the spatial
coefficients, as illustrated in Figure \ref{fig:setting}:
\begin{align} \label{coefficients}
    (K(x),\rho(x)) & = \begin{cases} (K_\lef, \rho_\lef) &  x < 0  \\
                                     (K(x), \rho(x)) & 0 \le x \le x_\righ \\
                                     (K_\righ, \rho_\righ) & x > x_\righ. \end{cases}
\end{align}
Here $x_\righ$ is the width of the region of varying coefficients, and need not be small.
For simplicity we consider the case of a right-going disturbance that is initially
confined to $x<0$, and investigate the resulting reflected and transmitted disturbances.
Thus
\begin{align} \label{initial-data}
    \begin{bmatrix} p(x,0) \\ u(x,0) \end{bmatrix} & = \begin{cases} p_0(x) \begin{bmatrix} 1 \\ 1/Z_- \end{bmatrix} & x < 0 \\ 0 & x \ge 0. \end{cases}
\end{align}
We assume for simplicity that $Z(x)$ is continuous.
Our method and results can be generalized in a natural way to arbitrary initial data and
piecewise continuous media.
\begin{figure}
\begin{centering}
\includegraphics[width=4in]{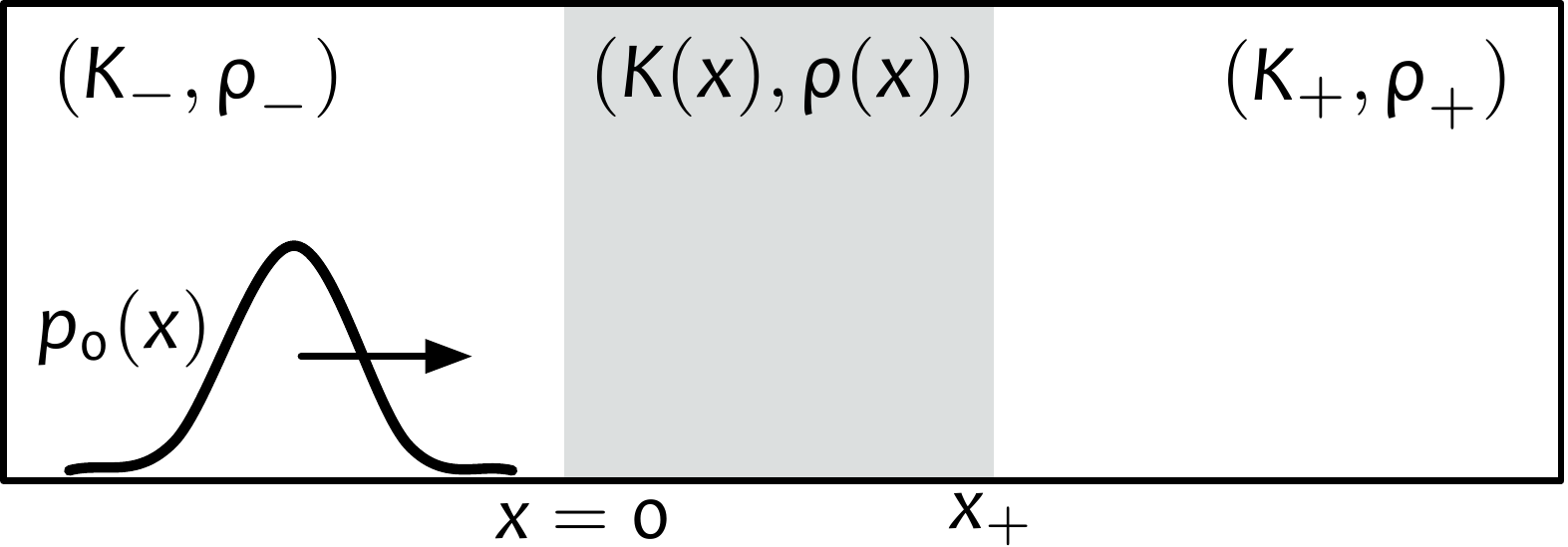}
\caption{The setting for most of the paper.
\label{fig:setting}}
\end{centering}
\end{figure}

Outside of the region $[0,x_\righ]$, characteristics are straight lines in the
$x$--$t$ plane.
Let $X(t)$ denote the characteristic starting from $x=0$ at time zero;
i.e., the solution of the initial value ODE
\begin{align} \label{charODE}
    X'(t) & = c(X(t)) & X(0) & = 0 & t \in [0,t_\righ].
\end{align}
Here $t_\righ$ is the crossing time so that $X(t_\righ)=x_\righ$.
It is convenient in what follows to extend $X(t)$ by defining
$X(t) = 0$ for $t<0$ and $X(t)=x_\righ$ for $t>t_\righ$.

\subsection{Amplification or attenuation along characteristics: Green’s Law}
In general the pressure is given by $p=w_1 + w_2$;
for the case of a pure right-going pulse \eqref{initial-data}, for which $w_1$ is zero,
we have $p(x,0)=w_2(x,0) = p_0$.
According to \eqref{advection-coupled}, along the path $X(t)$ the value of $w_2$ (and
hence the value of $p$) satisfies the ODE
\begin{align} \label{eq:attenuation}
    p'(X(t)) & = \frac{Z'(X(t))}{2Z(X(t))} p(X(t))
\end{align}
with solution
\begin{align}\label{Greensac}
    p(X) & = \left(\frac{Z(X)}{Z_\lef}\right)^{1/2} p_0.
\end{align}
In particular, at $x=x_\righ$ we have
\begin{equation}\label{green23}
\frac{p_\righ}{p_0} = \left(\frac{Z_\righ}{Z_\lef}\right)^{1/2}  = C_G.
\end{equation}
Thus the amplitude of the unreflected part of the wave (for $x\ge x_\righ$) is
$C_G p_0$ for any smoothly varying $Z(x)$, and depends only on the values
$Z_\lef$ and $Z_\righ$; it is independent of how $Z$ varies over $[0,x_\righ]$.
As we will see, \eqref{green23} represents the first term in an infinite
series that sums to the transmission coefficient $\Ct$.

\begin{remark} \label{remark:sw}
We use $C_G$ for the quantity defined in \eqn{green23} since this is the
amplification factor given by Green's law in the context of shoaling,
as we discuss in more detail in \cite{shoalingpaper}.
The linearized shallow water equations used there can
be put in the form \eqn{acoustics} by introducing $p(x,t)$ as the depth
perturbation of a small amplitude long wave on a background water depth
$h(x)$, and $\mu(x,t)$ as the momentum perturbation. Then the linearized
shallow water equations can be written in the nonconservative form
\begin{equation}\label{linswe}
\begin{split}
    p_t(x,t) + \mu_x(x,t) & = 0 \\
    \mu_t(x,t) + gh(x) p_x(x,t) & = 0,
\end{split}
\end{equation}
where $g$ is the gravitational constant.
This differs from the conservative form used in \cite{shoalingpaper}, and
has the same form as \eqn{acoustics} if we set $K(x)\equiv 1$ and
$\rho(x) = 1/(gh(x))$.  Then the wave speed is $c(x) = \sqrt{gh(x)}$,
the impedance is $Z(x) = 1/\sqrt{gh(x)}$,
and $C_G = (h_\lef/h_\righ)^{1/4}$. This is the standard form of
Green's law used to estimate the amplification of a shoaling wave
as it passes into shallower water, in which case $h_\lef > h_\righ$.
Note that this particular application is a special case in that there
is only a single variable coefficient $h(x)$, so it is not possible
to vary the wave speed and impedance separately.
\end{remark}

\begin{figure}
\begin{centering}
\includegraphics[width=4in]{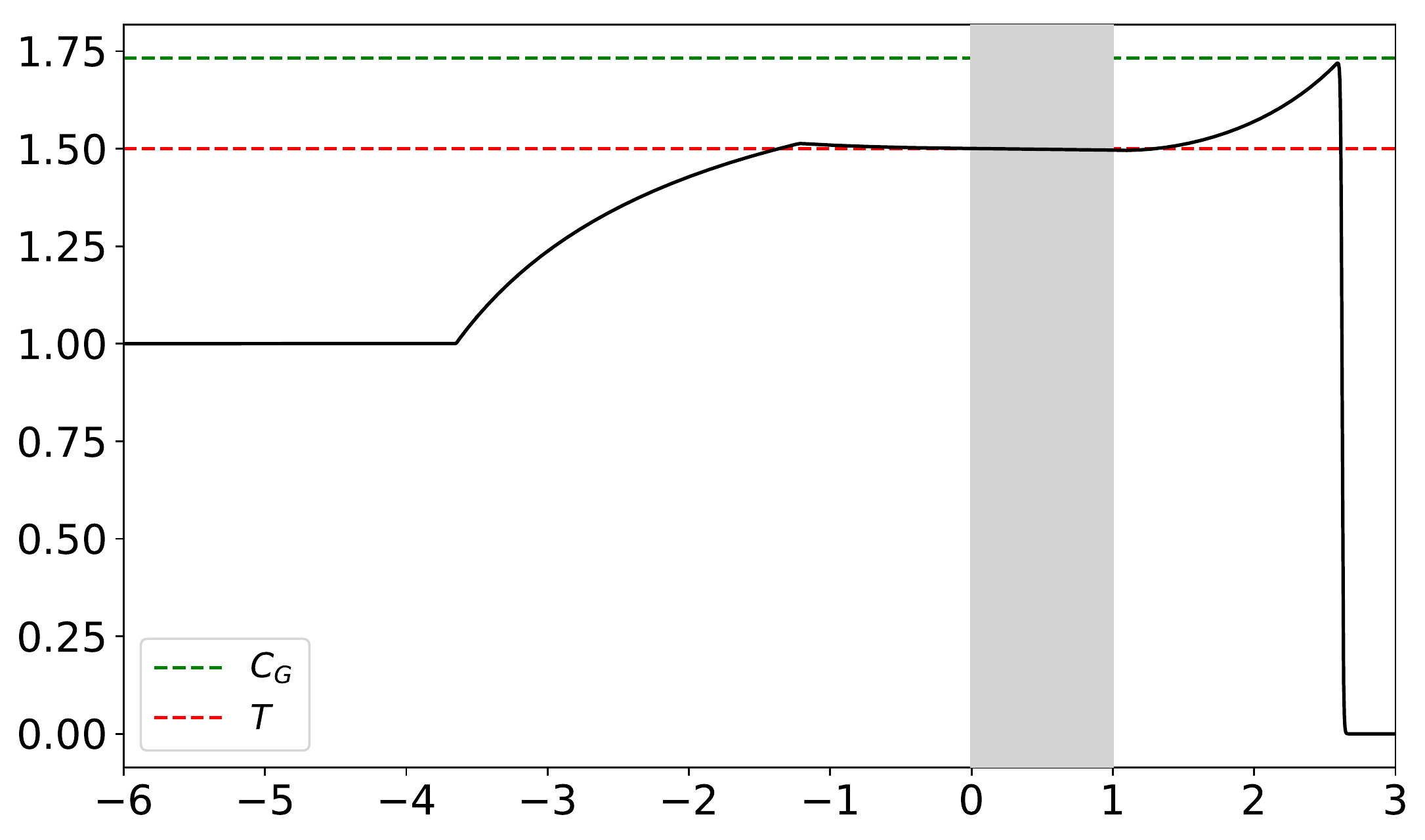}
\caption{Transmission and reflection of an initial right-going step.  The leading edge of the transmitted
part has amplitude $C_G$ (the amplitude in the absence of reflections), then tends to $\Ct$ at later
times as multiply reflected components contribute.
\label{fig:Greenslaw}}
\end{centering}
\end{figure}

Both the amplification factor $C_G$ and the transmission coefficent $\Ct$ defined
in \eqref{Ctrans} are related to the amplitude of transmitted waves.  Their differing
roles are illustrated in Figure \ref{fig:Greenslaw}, where we consider the propagation of
a step function (taking $p_0(x)=1$, and with an impedance that grows linearly from $Z_-=1$ to $Z_+=3$
in the region $[0,1]$).  Since $C_G$ governs the amplification along
characteristics, the leading part of the transmitted wave (which is unaffected
by paths with turning points, since they will emerge at later times)
has amplitude $C_G$.  Meanwhile, $\Ct$ accounts for the cumulative
effect of all paths (including those that have turning points)
and so the amplitude of the transmitted wave at long times approaches $\Ct$.

In Figure \ref{fig:limit} we consider what happens as $x_+$ tends to zero, for
fixed values of $Z_-, Z_+$.
We again take a step function as the initial condition (plotted as a dashed
line).  In this case the solution
is invariant if $x_+$ and $t$ are scaled by the same factor, but in Figure \ref{fig:limit}
we have plotted solutions for different values of $x_+$ all at the same time
$t$.  We see that as the region $[0,x_+]$
shrinks, the width of the transmitted peak becomes increasingly narrow until in
the limit $x_+=0$ (for which the impedance is discontinuous) the peak is gone
and we have a single intermediate state dictated by the transmission coefficient.
One way to think about this is that since the discontinuity
has infinitesimally small width, the effects of all relevant paths
must arrive in infinitesimally short time. The combinatorial relation between
the transmission/reflection coefficients and $C_G$ is further explored in
Section \ref{Asymptoticzigzags}.

\begin{figure}
\begin{centering}
\includegraphics[width=6in]{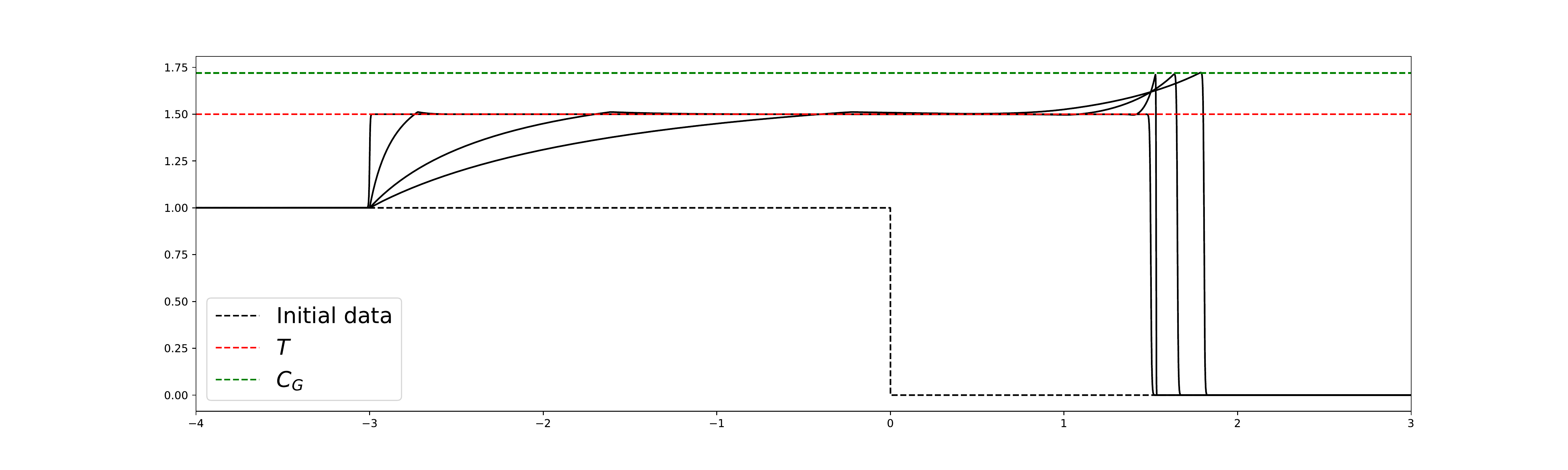}
\caption{A sequence of solutions with differing values of $x_+$ (the width of
the variable region).  In all cases the initial data is a step
function with unit amplitude (dashed line) and the impedance increases linearly from $Z_-=1$ to $Z_+=3$.
The width of the region of varying impedance is taken to be $[1, 1/2, 1/10, 0]$.
\label{fig:limit}}
\end{centering}
\end{figure}

\subsection{Approximating the reflected wave}
Let us now turn our attention to the reflected wave in Figure \ref{fig:Greenslaw}.
The main contribution to this wave comes from paths with one turning point,
as illustrated in Figure \ref{fig:Charsreflectedonce}.
The figure on the left shows two paths that emerge at $x=0$
at the same time but started from different initial points and were
reflected at different points.  It is evident that at any time $t$ there
will be such a path reaching $x=0$ that was reflected from point
$x$ for each $x\in(0,X(t/2))$, since the path reflected from the rightmost
point (the red path in the figure) must have traveled from $x=0$
to the point of reflection in time $t/2$.  For the initial condition $p_0(x)=1$,
the solution along each of these paths has the same initial amplitude.
In this case the combined amplitude of these reflected waves is
\begin{align}\label{once_reflected}
\int_0^{X(t/2)} r(x_1) d x_1 = \frac{1}{2}\log \left(\frac{Z(X(t/2))}{Z_-}\right),
\end{align}
where $r(x)$ is defined in \eqref{inf-refl-coeff}.
Figure \ref{fig:Charsreflectedonce} shows this diagrammatically. Initially, the
reflected wave only contains the contribution of paths reflected near
$x=0$.  After some time $2t_+$, reflections from the whole interval $[0,x_+]$
contribute, resulting in a constant asymptotic reflected amplitude.

\begin{figure}
\begin{centering}
\subfloat{ \includegraphics[width=2.8in]{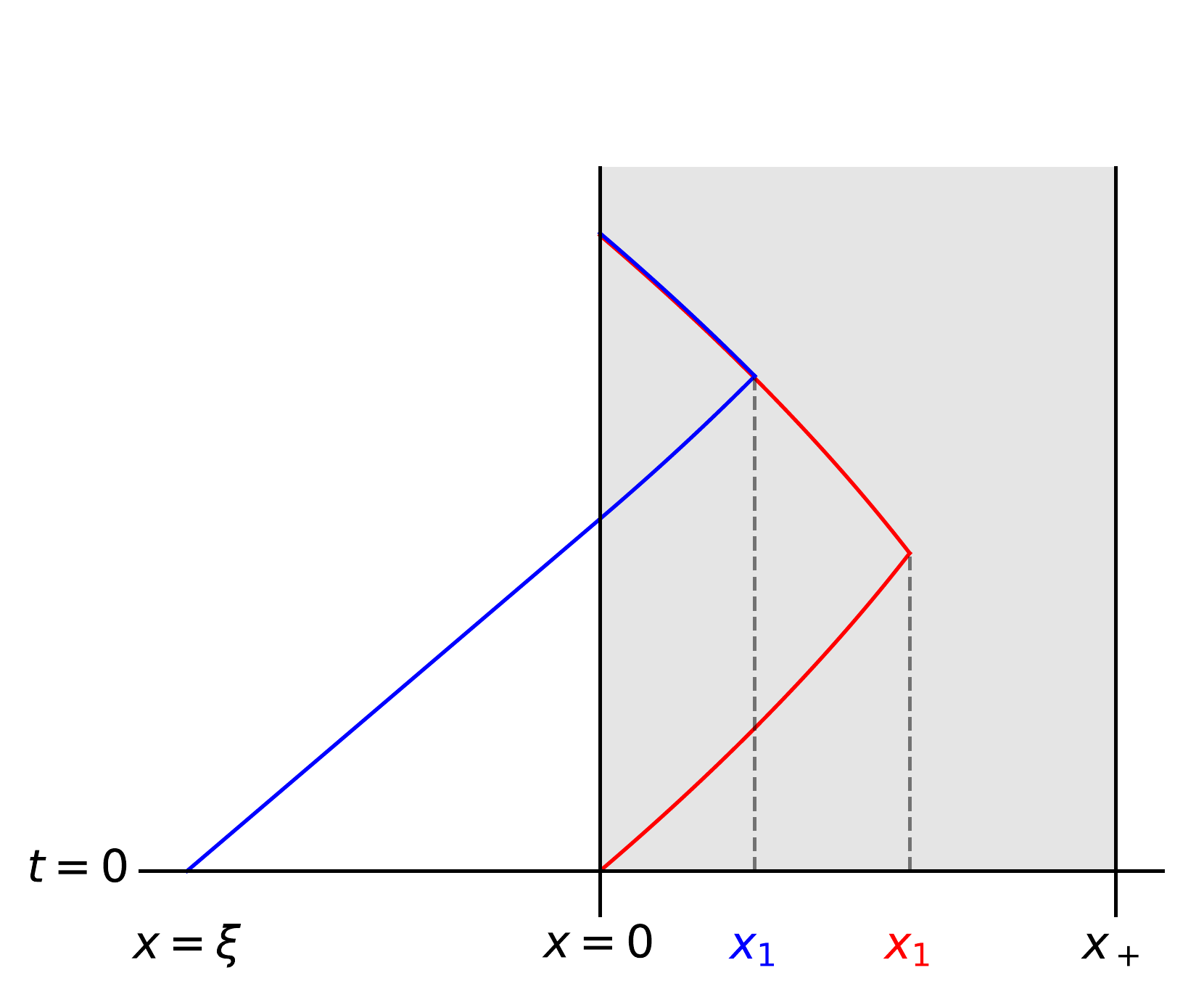}}
\subfloat{ \includegraphics[width=3in]{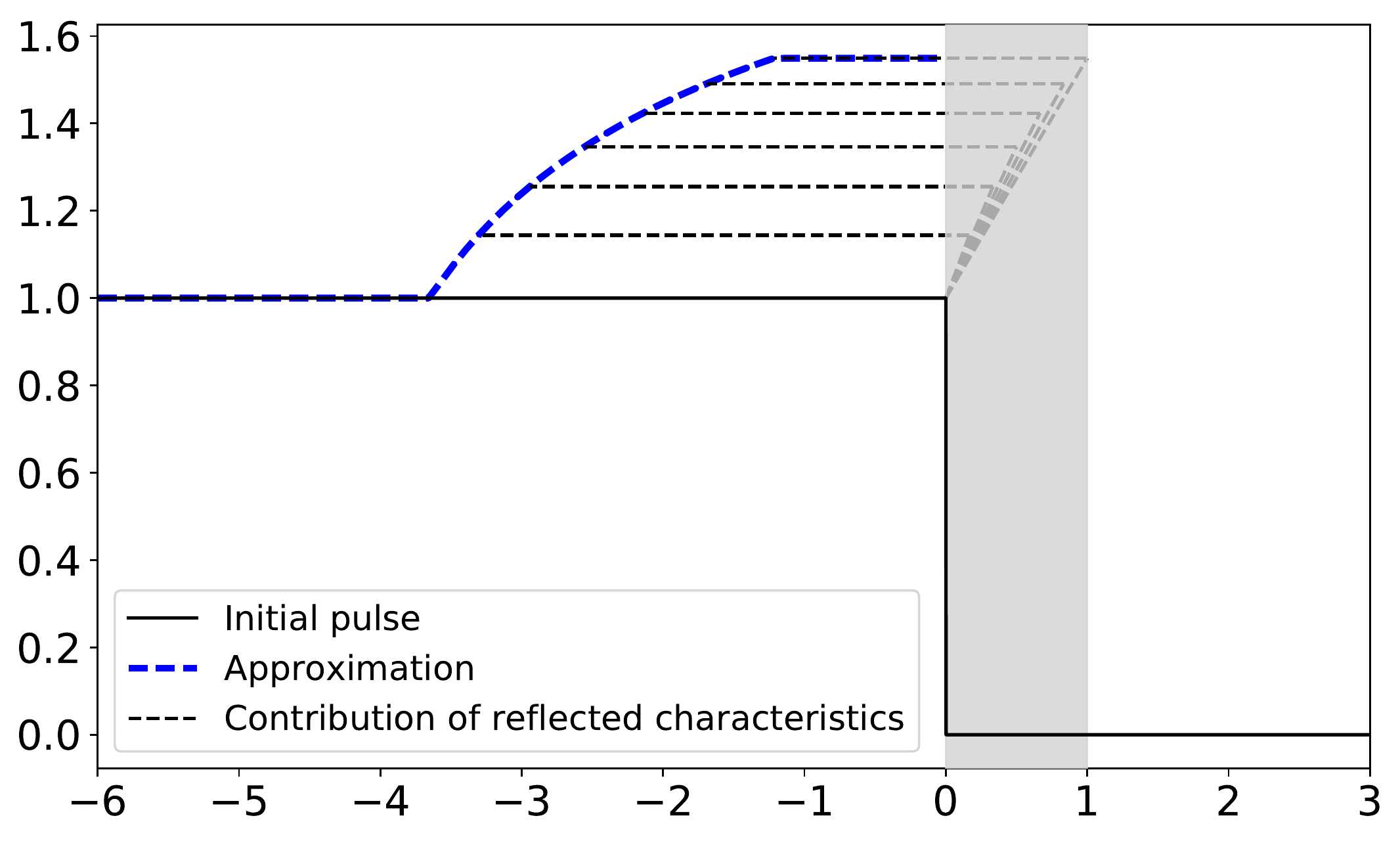}}
\caption{The reflected wave exists because of the cumulative effect of
paths with turning points in $[0,x_+]$.  This figure only shows paths with one
turning point, which is why the blue curve is an approximation to the reflected
wave.
\label{fig:Charsreflectedonce}}
\end{centering}
\end{figure}

\section{General solution by integrating over all paths}
In this section we construct a series for the solution at the boundaries of the
variable region:
\begin{subequations} \label{solution-series}
\begin{align}
    p(0,t) - p(-c_- t,0) & = \sum_{m=0}^\infty R_{2m+1}(t) \\
    p(x_+,t) & = \sum_{m=0}^\infty T_{2m}(t).
\end{align}
\end{subequations}
Here $R_n$ and $T_n$ denote contributions from paths involving
$n$ reflections.  We have effectively computed $T_0(t)$ and $R_1(t)$ already in
the previous sections; from our derivations of \eqref{green23} and \eqref{once_reflected} it
is straightforward to obtain the more general expressions
\begin{align*}
    T_0(t) & = C_G p_0(-c_- (t-t_+)) \\
    R_1(t) & = \int_0^{X(t/2)} p_0(-c_-(t-2\tau_1)) r(x_1)dx_1
\end{align*}
which give the part of the transmitted solution due to paths with no reflections
and the part of the reflected solution due to paths with a single
reflection, respectively.  Here and below, $\tau_j$ denotes the time for a
characteristic to reach $x_j$ from $x=0$.

The function $X(t)$ defined in \eqref{charODE} gives a characteristic; i.e., a path with
no points of reflection.  More generally, consider a path involving
reflection at the sequence of points $\bx = \{x_1, x_2, \dots, x_n\} \in [0,x_\righ]$,
which we refer to as the reflection point sequence for this path.
This path is a union of curves $X_j(t)$ ($j=0,1,2,\dots$), each of which is
the solution of an initial value problem:
\begin{align*}
    X_j'(t) & = (-1)^j c(X_j(t)) & X_j(t_j) & = x_j & t \in [t_j, t_{j+1}].
\end{align*}
Here $x_0=0$ and $x_{n+1}$ is either zero (for reflected paths) or
$x_\righ$ (for transmitted paths).  The value of $t_j$ is the time at which the path
reaches $x_j$.  For a given medium,
a path is determined completely by the reflection points $\bx$ and the initial
time $t_0$.  Some examples of such paths are given in Figures
\ref{fig:reflected} and \ref{fig:transmitted}.  Notice that the shape of the
curves $X_j$ depends on the variation of $c(x)$, but all can be obtained by
applying a temporal offset to $X(t)$ and (for left-going segments)
reflecting the curve $X(t)$ vertically in the $x$--$t$ plane.


In keeping with the method of characteristics, we would like to add up the
contributions of all paths arriving at a given place and time $(x,t)$.
One way to do this is to sum over all valid reflection point sequences.
Notice that the reflection point sequence cannot be an arbitrary
sequence of points in $[0,x_\righ]$.
We need to sum over all paths with an {\em alternating sequence} of
reflection points, as defined by:
\begin{definition}
A sequence $\bx = \{x_1, x_2, \cdots x_n\}$ is an {\em alternating (down-up) sequence} if
\begin{align*}
    x_j & \le x_{j-1} & \text{ for $j$ even, and} \\
    x_j & \ge x_{j-1} & \text{ for $j$ odd.}
\end{align*}
\end{definition}
Henceforth we use the term {\em alternating} to mean, specifically, down-up sequences.
Let
\begin{align} \label{paths-def}
    \Paths_n^{[\alpha,\beta]} := \{\bx \in [\alpha,\beta]^n : \bx \text{ is an alternating sequence.}\}.
\end{align}
Then an integral over all paths with $n$ reflection points in $[0,x_+]$ is
an integral over $\Paths_n^{[0,x_+]}$.  Thus the terms in \eqref{solution-series}
are given by the following iterated integrals. Note that $x_1$ in the outermost
integral can be anywhere in $[0, x+]$; then $x_2$ must be chosen in $[0, x_1]$, and
$x_3$ in $[x_2, x+]$, etc.
\begin{subequations} \label{TandR-integrals}
\begin{align} \label{reflection-integral}
R_{2m+1}(t) & := (-1)^m     \int \cdots \int_{\Paths^{[0,x_+]}_{2m+1}} p_0(\xi_R(\bx,0,t)) \prod_{j=1}^{2m+1} r(x_j) dx_j \\
T_{2m}(t)   & := (-1)^m C_G \int \cdots \int_{\Paths^{[0,x_+]}_{2m}}   p_0(\xi_T(\bx,x_+,t)) \prod_{j=1}^{2m}   r(x_j) dx_j. \label{transmission-integral}
\end{align}
\end{subequations}
Here $\xi_R(\bx,x,t)$ is the starting point for the path with reflection
points $\bx$ arriving eventually at $(x,t)$, while $\xi_T(\bx,x,t)$ is the starting
point for the path with reflection points $\bx$ arriving eventually
at $(x,t)$.
The factor $(-1)^m$ appears because the reflection coefficient for a characteristic
initially going left is $-r(x)$, so every even-numbered reflection involves
a factor of $-1$.
Thus $p_0(\xi)$ gives the initial solution value corresponding
to a given path, and the product of reflection coefficients
gives the part of that value that eventually contributes to the reflected or transmitted wave.
The limits of integration take into account that the reflection points
must be an alternating sequence.

The factor $C_G$ appearing in \eqref{transmission-integral} is due to variation
along a characteristic as described by the solution of \eqref{eq:attenuation}.
It is absent in \eqref{reflection-integral}
because the value of the solution along a characteristic traveling left changes
by exactly the reciprocal factor and so there is no net change in amplitude
for a path that returns to $x=0$.

We can compute the full solution (to any desired accuracy) by considering the
contributions from all paths involving $n=1, 2, \dots, N$
reflection points.  To complete this approach we only need to determine how $\xi$
depends on $\bx$ and $t$, which we do in the next two subsections.

\subsection{Reflection}

\begin{figure}
\begin{centering}
\subfloat[Characteristics contributing to $T_2(t)$\label{fig:transmitted}]{ \includegraphics[width=3in]{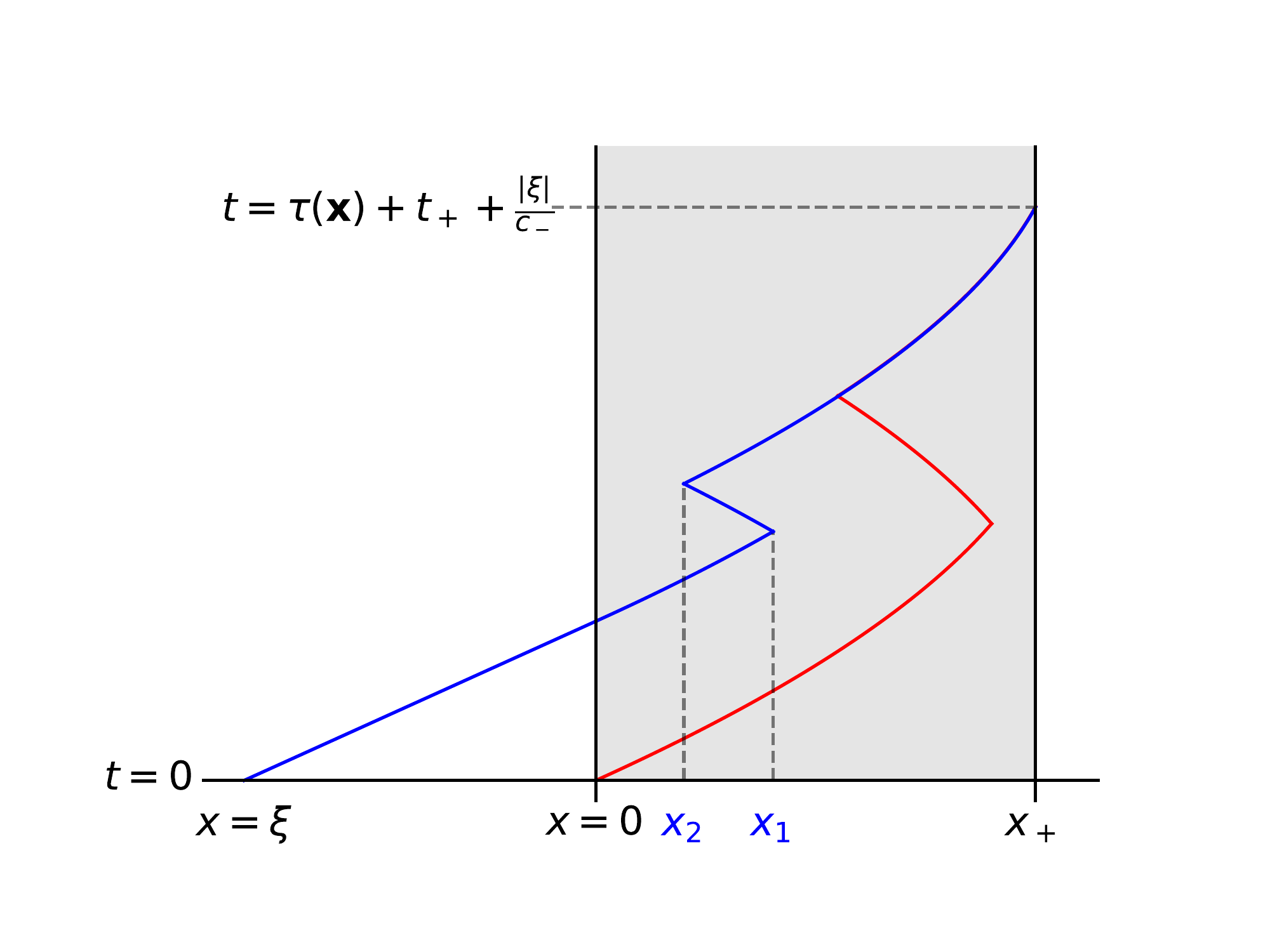}}
\subfloat[Characteristics contributing to $R_3(t)$\label{fig:reflected}]{ \includegraphics[width=3in]{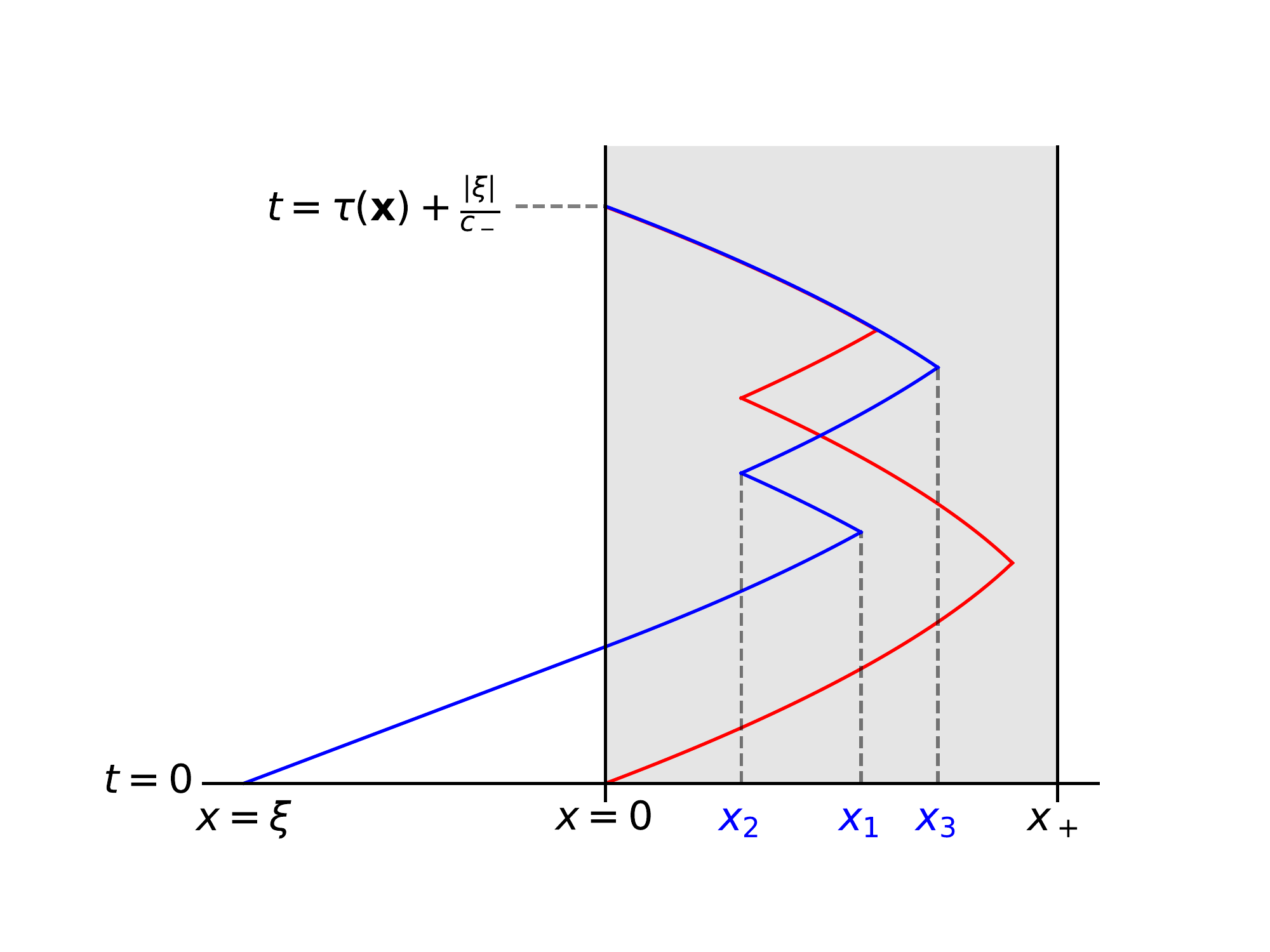}}
\caption{Characteristic paths starting from different points but arriving simultaneously
to contribute to the indicated transmission and reflection terms.  For clarity, only the reflection
points of the blue trajectories are marked. \label{fig:chars}
}
\end{centering}
\end{figure}
Let us work out the initial location $\xi(\bx,x,t)$ for a path
passing through $x=0$ (going to the right)
that is subsequently reflected at the points $x_1, x_2, \dots, x_n\in(0,x_\righ)$
and eventually arrives going to the left at $(x,t)$, for some $x\in[0,x_+]$ (see \cref{fig:reflected}).
It is convenient to define
\begin{align} \label{t-hat}
\traveltime(x_1, x_2, \dots, x_n) = \traveltime(\bx) = 2\sum_{j=1}^{n}(-1)^{j+1} \tau_j,
\end{align}
where again $\tau_j$ is the travel time from $x=0$ to $x_j$,
The travel time for the path is $\tau(\bx)-\tau(x)$.
Thus this path must have first passed through
$x=0$ at time $t-\traveltime(\bx)+\tau(x)$.
Hence it must have originated at time zero from
\begin{align} \label{xi_R}
\xi_R(\bx,x,t) & := -c_\lef(t-\traveltime(\bx)+\tau(x)) & t\ge \traveltime(\bx)-\tau(x).
\end{align}

We can compute the contribution of all paths that are
eventually reflected, for any initial condition $p_0$,
using \eqref{reflection-integral} with $\xi_R(\bx,0,t)$ given by \eqref{xi_R}.

\subsection{Transmission}
Consider a path starting at $x=0$ (going to the right)
that is reflected at the points $x_1, x_2, \dots, x_n\in(0,x_\righ)$
and arrives at $(x,t)$ going to the right, for some $x\in[0,x_+]$ (see \cref{fig:transmitted}).
The total time to traverse this
path is $\traveltime(\bx) + \tau(x)$. Hence it must have originated at
time zero from
\begin{align} \label{xi_T}
\xi_T(\bx,x,t) = -c_-(t-\traveltime(\bx)-\tau(x)).
\end{align}
Each path contributing to the transmitted wave starts at $x\le0$ (with impedance $Z_-$) and ends
at $x=x_\righ$ (with impedance $Z_\righ$), so the net change in the value of the solution
along this path due to Green's law is given by the factor $C_G$ defined in \eqref{green23}.
Hence the contribution to the solution is given by $C_G r(x_1) r(x_2) \cdots r(x_n)$,
leading to the integral \eqref{transmission-integral} for the total contribution of all
paths with $n$ reflections.

We can compute the contribution of all paths that are
eventually transmitted, for any initial condition $p_0$,
using \eqref{transmission-integral} with $\xi_T(x,x_+,t)$ given by \eqref{xi_T}.
\new{In the next two subsections we specialize the formula above for the important
cases of a step (Heaviside) function and a $\delta$-function.}

\subsection{Scattering of a step function}
In this section we apply the approach just outlined to the propagation of an
initial condition consisting of a step:
\begin{align} \label{step}
p_0(x) = \begin{cases} 1 & x \le 0 \\ 0 & x > 0. \end{cases}
\end{align}
Since the step function is the integral of a $\delta$-function, the
resulting solution gives the integral of the Green's function for the
problem, and can be used as a basis to obtain solutions for arbitrary
initial data.

Straightforward calculation shows that the required values of $\xi$
in this case are simply
\begin{align} \label{stepfun_t}
p_0(\xi_R(\bx,x,t)) = \begin{cases} 0 & t < \traveltime(\bx) \\  1 & t \ge \traveltime(\bx) \end{cases}
\end{align}
for reflected components and
\begin{align} \label{stepfun_t_trans}
p_0(\xi_T(\bx,x,t)) = \begin{cases} 0 & t < \traveltime(\bx) + t_\righ \\  1 & t \ge \traveltime(\bx) + t_\righ \end{cases}
\end{align}
for transmitted components, where $\traveltime$ is defined in \eqref{t-hat}.

The integrals \eqref{reflection-integral} and \eqref{transmission-integral} for
the reflected and transmitted components can thus be written
\begin{subequations}
\begin{align} \label{reflection-integral-step}
    R_{2m+1}(t) & = (-1)^m \int \cdots \int_{\Paths^{[0,x_+]}_{2m}}
    \int^{x_+}_{\hat{x}_{2m+1}(t;\bx)} \prod_{j=1}^{2m+1} r(x_j) dx_j, \\
\label{transmission-integral-step}
    T_{2m}(t) & = (-1)^m C_G \int \cdots \int_{\Paths^{[0,x_+]}_{2m-1}}
    \int_0^{\hat{x}_{2m}(t;\bx)} \prod_{j=1}^{2m} r(x_j) dx_j.
\end{align}
\end{subequations}
Here the limits of integration $\hat{x}_{2m+1}(t;\bx)$ and $\hat{x}_{2m}(t;\bx)$
impose the condition that the path must reach $x=x_\righ$ by time $t$:
\begin{align*}
    \hat{x}_{2m}(t;\bx)  & = X((\traveltime(\bx)-t-t_+)/2) \\
    \hat{x}_{2m+1}(t;\bx) & = X((t-\traveltime(\bx))/2).
\end{align*}

\new{
\subsection{Scattering of a delta function}
Next we consider initial data consisting of a $\delta$-function:
$$
p_0(x) = \delta(x).
$$
Since this is the distributional derivative of the step function \eqref{step},
we can obtain the solution by differentiating \eqref{transmission-integral-step},
which yields
\begin{subequations}
\begin{align} \label{reflection-integral-delta}
    R_{2m+1}(t) & = (-1)^m \int \cdots \int_{\Paths^{[0,x_+]}_{2m}}
    r(\hat{x}_{2m+1}) \prod_{j=1}^{2m} r(x_j) dx_j, \\
\label{transmission-integral-delta}
    T_{2m}(t) & = (-1)^m C_G \int \cdots \int_{\Paths^{[0,x_+]}_{2m-1}}
    r(\hat{x}_{2m}(t;\bx)) \prod_{j=1}^{2m-1} r(x_j) dx_j.
\end{align}
\end{subequations}
}

\section{Relation between Green's coefficient and the transmission/reflection coefficients}\label{Asymptoticzigzags}
Let us consider what happens for long times; let $T_{2m}^\infty = \lim_{t\to\infty}T_{2m}(t)$.
Then $\hat{x}_{2m}(t;\bx)=x_+$ (for all $\bx$) and it is straightforward but tedious to evaluate the multiple integral
\eqref{transmission-integral-step}; the result depends only on $C_G$ and $m$.
For each value of $m$, $T_{2m}^\infty = (-1)^m a_{2m} C_G (\log(C_G))^{2m}$,
where the constants $a_{2m}$ for $m=1,2,3,\dots$ are
\begin{align} \label{sec-coeffs}
1, ~1/2, ~5/24, ~61/720, ~277/8064, ~50521/3628800, ~540553/95800320, \dots
\end{align}
We now explain where this sequence comes from.

For $t \ge (n+1) t_\righ$, the integral \eqref{transmission-integral-step} for $T_{2m}$ is over all of $\Paths_n$.
It can be simplified using the substitution $y(x) = \log(Z(x))/2$.
Also let $y_+=\log(Z_+)/2, y_-=\log(Z_-)/2$.  For simplicity we assume that $Z(x)$ is
monotone increasing.
Then
\begin{align}
\begin{split} \label{simple-integral}
    \frac{T_{2m}^\infty}{C_G}  & = (-1)^m \int_{y_-}^{y_+}dy_1
                                 \int_{y_-}^{y_1}dy_2 \int_{y_2}^{y_+}dy_3 \cdots \int_{y_-}^{y_{2m-1}}dy_{2m}
                                 = \Vol(\Paths^{[y_-,y_+]}_{2m}).
\end{split}
\end{align}
Let $n=2m$; then (ignoring the sign for the moment)
this integral is the volume of some subset of the $n$-dimensional
hypercube $[y_-,y_+]^n$; namely, the volume of the set $\Paths^{[y_-,y_+]}_{2m}$
(see \eqref{paths-def}).  It is does not include the full hypercube
because the reflection points are required to be an alternating sequence
(this requirement is enforced by the limits of integration).
Notice that since $Z(x)$ is monotone increasing, this is equivalent to the condition
that the sequence ${y_1, y_2, \dots, y_n}$ be alternating.
The integral in \eqref{simple-integral} gives the volume
of the subset of the hypercube that satisfies this alternating
condition.  The volume of the whole hypercube is of course $(y_+-y_-)^n=(\log(C_G))^n$.

To determine the value of the integral \eqref{simple-integral}, let us
partition the hypercube into $n!$ equal parts, where each part is defined by
a particular ordering  of the $y_j$.  For instance, with $n=4$ we would write
$$
V_{ijkl} = \{ (y_1, y_2, y_3, y_4) : y_i < y_j < y_k < y_l \},
$$
where $(i,j,k,l)$ ranges over all permutations of $(1,2,3,4)$.
Each of the sets $V_{ijkl}$ must have the same volume since there is
nothing to distinguish a particular coordinate direction.  Thus each
has volume $(\log(C_G))^n/n!$.  The value of the integral \eqref{simple-integral}
is determined by how many of the $V_{ijkl}$ satisfy the alternating condition.
With $n=4$ there are 5 alternating sequences:
$$
(4,2,3,1), ~(4,1,3,2), ~(3,2,4,1), ~(3,1,4,2), ~(2,1,4,3),
$$
so the integral yields $(5/24)(\log(C_G))^4$.
In general, the number of alternating sequences of length $n$ is known as
the $n$th {\em Euler zigzag number} (or just {\em zigzag number}); for even $n$
these are also known as secant numbers
or simply zig numbers \cite{andre1881,zigzag_oeis}.  We have proved
\begin{lemma} \label{volume-lemma}
Let $\Paths_n$ be defined by \eqref{paths-def}.  Then
\begin{align}
    \Vol\left(\Paths_n^{[\alpha,\beta]}\right) = \frac{A_n}{n!}(\beta-\alpha)^n,
\end{align}
where $A_n$ is the $n$th zigzag number;
i.e., the number of alternating permutations of a sequence of length $n$.
\end{lemma}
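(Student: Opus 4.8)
The plan is to prove the identity $\Vol\bigl(\Paths_n^{[\alpha,\beta]}\bigr) = \frac{A_n}{n!}(\beta-\alpha)^n$ by a symmetry argument that reduces the volume computation to a pure counting problem. First I would observe that the value of the volume is unchanged under the affine rescaling $x_j \mapsto (x_j-\alpha)/(\beta-\alpha)$, so it suffices to treat the unit cube: we must show $\Vol\bigl(\Paths_n^{[0,1]}\bigr) = A_n/n!$. The factor $(\beta-\alpha)^n$ then reappears by the change-of-variables Jacobian.

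Next I would set up the partition of the open cube $(0,1)^n$ into the $n!$ regions $V_\sigma = \{\mathbf{x} : x_{\sigma(1)} < x_{\sigma(2)} < \cdots < x_{\sigma(n)}\}$ indexed by permutations $\sigma \in S_n$; these are disjoint, and their union differs from $(0,1)^n$ only by the measure-zero set where two coordinates coincide. By the symmetry of Lebesgue measure under coordinate permutations, each $V_\sigma$ has the same volume, namely $1/n!$. The crucial reduction is that membership of a point $\mathbf{x}$ in the alternating region $\Paths_n^{[0,1]}$ — i.e. the conditions $x_j \le x_{j-1}$ for $j$ even and $x_j \ge x_{j-1}$ for $j$ odd — depends only on the relative order of the coordinates, hence only on which $V_\sigma$ the point lies in. Therefore $\Paths_n^{[0,1]}$ is (up to a null set) the disjoint union of those $V_\sigma$ for which the ordering encoded by $\sigma$ satisfies the down-up alternating pattern, and $\Vol\bigl(\Paths_n^{[0,1]}\bigr) = (\#\text{such }\sigma)/n!$.

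It then remains to identify the number of such $\sigma$ with the zigzag number $A_n$. Here I would be careful about the encoding: the region $V_\sigma$ consists of points whose rank vector is the inverse permutation $\sigma^{-1}$, so "alternating coordinate sequence" translates to an alternating permutation in the standard combinatorial sense, and since inversion is a bijection on $S_n$ the count is unaffected. By definition (André's theorem, \cite{andre1881,zigzag_oeis}) the number of down-up alternating permutations of $\{1,\dots,n\}$ is precisely $A_n$, which matches the explicit small cases already verified in the text ($A_4 = 5$, giving $5/24$). This completes the proof.

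The main obstacle is purely bookkeeping rather than conceptual: one must pin down exactly which convention ("down-up" versus "up-down", permutation versus its inverse, strict versus non-strict inequalities) is in force, and check that the measure-zero overlaps where coordinates coincide genuinely contribute nothing — both routine, but worth stating cleanly so that the combinatorial count $A_n$ is unambiguously the one tabulated in \eqref{sec-coeffs}.
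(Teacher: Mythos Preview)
Your proof is correct and follows essentially the same argument as the paper: partition the cube into the $n!$ equal-volume simplices determined by coordinate orderings, observe that the alternating condition selects exactly those simplices corresponding to alternating permutations, and identify their count with the zigzag number $A_n$. Your version is somewhat more careful about the measure-zero boundary and the permutation-versus-inverse bookkeeping, but the idea is identical.
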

An immediate consequence is
\begin{theorem}
Let $Z(x)$ be monotone and define
\begin{align} \label{bn}
    b_n(z) = \frac{A_n}{n!} z^n
\end{align}
Then the asymptotic contributions for the step are given by
\begin{subequations}
\begin{align}
    T_{n}^\infty & = C_G b_n(i\log(C_G)) = C_G \frac{A_n}{n!}(i\log(C_G))^n & \text{for $n$ even} \label{Tnterm} \\
    R_{n}^\infty & = i b_n(i\log(C_G)) = i \frac{A_n}{n!}(i\log(C_G))^n & \text{for $n$ odd}, \label{Rnterm}
\end{align}
\end{subequations}
where $i$ denotes the imaginary unit.
\end{theorem}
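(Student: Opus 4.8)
The plan is to read the theorem off from Lemma~\ref{volume-lemma} via the change of variables already used around~\eqref{simple-integral}: the combinatorial work is packaged in the lemma, and only a limit, a substitution, and some sign bookkeeping remain. \emph{First}, I would pass to the limit $t\to\infty$ in the step-function formulas. As observed just before~\eqref{simple-integral}, once $t\ge(n+1)t_\righ$ the cut-offs $\hat x_n(t;\bx)$ coming from \eqref{stepfun_t}--\eqref{stepfun_t_trans} no longer bind, so the domain of integration in \eqref{transmission-integral-step} (and, by the same reasoning, in \eqref{reflection-integral-step}) is the full set $\Paths_n^{[0,x_\righ]}$. Letting $t\to\infty$ therefore gives
\begin{align*}
T_{2m}^\infty &= (-1)^m C_G\!\int\!\cdots\!\int_{\Paths_{2m}^{[0,x_\righ]}}\prod_{j=1}^{2m} r(x_j)\,dx_j, &
R_{2m+1}^\infty &= (-1)^m\!\int\!\cdots\!\int_{\Paths_{2m+1}^{[0,x_\righ]}}\prod_{j=1}^{2m+1} r(x_j)\,dx_j.
\end{align*}

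\emph{Next}, I would substitute $y=y(x):=\tfrac12\log Z(x)$, so that $dy=r(x)\,dx$ by~\eqref{inf-refl-coeff}. Because $Z$ is monotone, $y$ is a strictly monotone bijection of $[0,x_\righ]$ onto the interval with endpoints $y_\pm=\tfrac12\log Z_\pm$, and it is exactly this monotonicity that makes it carry alternating $x$-tuples to alternating $y$-tuples (``down-up'' becoming ``up-down'' when $Z$ decreases, which leaves volumes unchanged by the symmetry between down-up and up-down sequences). Carrying out the substitution -- and noting that the sign of $dy=r\,dx$ and the orientation of the image interval contribute canceling powers of $(-1)^n$ -- reduces each iterated integral above to the pure volume $\Vol\!\big(\Paths_n^{[y_-,y_+]}\big)$, evaluated by Lemma~\ref{volume-lemma} as $\tfrac{A_n}{n!}(y_+-y_-)^n$ (using the convention $\Vol(\Paths_n^{[\alpha,\beta]})=\tfrac{A_n}{n!}(\beta-\alpha)^n$ also when $\beta<\alpha$). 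Since $y_+-y_-=\tfrac12\log(Z_\righ/Z_\lef)=\log C_G$, this yields
\[
T_{2m}^\infty=(-1)^m C_G\,\frac{A_{2m}}{(2m)!}(\log C_G)^{2m},\qquad
R_{2m+1}^\infty=(-1)^m\,\frac{A_{2m+1}}{(2m+1)!}(\log C_G)^{2m+1}.
\]

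\emph{Finally}, I would absorb the alternating sign into the argument using $(-1)^m=i^{2m}$: for even $n=2m$ this gives $(-1)^m(\log C_G)^{2m}=(i\log C_G)^{2m}$ directly, hence $T_n^\infty=C_G\,b_n(i\log C_G)$ as in~\eqref{Tnterm}; for odd $n=2m+1$ exactly one factor of $i$ survives the regrouping, presenting $R_n^\infty$ as $b_n(i\log C_G)$ times an imaginary-unit factor, which is the asserted form~\eqref{Rnterm}. (As a consistency check, summing over $m$ recovers the classical transmission and reflection coefficients $\Ct$ and $\Cr$ of~\eqref{discontTR}.)

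I expect the only genuine difficulty to be the phase/sign bookkeeping in the last two steps: verifying that the $(-1)$'s produced by the change of variables when $Z$ is decreasing, by the orientation of the transformed interval, and by the prefactor $(-1)^m$ in \eqref{reflection-integral-step}--\eqref{transmission-integral-step} combine to exactly the power of $i$ appearing in $b_n(i\log C_G)$ -- and, relatedly, making precise the sense in which Lemma~\ref{volume-lemma} is invoked when $y_+-y_-$ is negative. Everything else (the vanishing of the cut-offs in the limit, and the reduction of the weighted iterated integral to $\Vol(\Paths_n)$) is routine, and the monotonicity hypothesis on $Z$ is used only to guarantee that reduction.
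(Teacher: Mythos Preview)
Your proposal is correct and follows essentially the same route as the paper: the text leading up to Lemma~\ref{volume-lemma} already performs the substitution $y=\tfrac12\log Z(x)$ to rewrite the $t\to\infty$ integrals as $\Vol(\Paths_n^{[y_-,y_+]})$, and the theorem is then stated as an immediate consequence of that lemma. Your treatment is slightly more careful than the paper's in handling the decreasing case (the paper just assumes $Z$ increasing ``for simplicity''), and your flagged concern about the sign bookkeeping in passing from $(-1)^m(\log C_G)^{2m+1}$ to the form \eqref{Rnterm} is warranted and worth checking explicitly.
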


The name {\em zigzag} seems eminently appropriate for numbers that appear in the
context of Figure \ref{fig:chars}.  Nevertheless, it is worth noting that the original
meaning of the name was a reference to zigzags in the discrete setting and had nothing
to do with space or paths.
There are many recursive formulas for the zigzag numbers; in the course of this work we
rediscovered the following formula by evaluating the multiple integrals \eqref{transmission-integral-step}
\cite{zigzag_oeis}.
Let $a_n=A_n/n!$; then the $a_n$ are generated by setting $a_0=a_1=1$ and computing
\begin{align*}
a_{2m} & = \sum_{j=1}^m \frac{(-1)^{j-1}}{(2j)!}a_{2(m-j)} \\
a_{2m+1} & = \sum_{j=1}^m \frac{(-1)^{j-1}}{(2j-1)!}a_{2(m-j+1)}.
\end{align*}
These formulas recover the values \eqref{sec-coeffs} and the corresponding sequence
for the reflection terms.
We recall the following combinatorial result due to Andr\'e \cite{andre1881}:
\begin{theorem}[Andr\'e's Theorem]
Let $b_n(z)$ be defined by \eqref{bn}.  Then
\begin{align*}
&    \sum_{m=0}^\infty b_n(z) = \sec(z) + \tan(z).
\end{align*}
\end{theorem}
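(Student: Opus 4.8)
The plan is to prove Andr\'e's theorem along the classical lines: convert the combinatorial description of the zigzag numbers $A_n$ (the number of alternating permutations of a sequence of length $n$, as used in Lemma~\ref{volume-lemma}) into a first-order ODE for their exponential generating function, and then identify the solution. Concretely, set $y(z):=\sum_{n\ge 0}b_n(z)=\sum_{n\ge 0}\frac{A_n}{n!}z^n$; the claim is that $y(z)=\sec z+\tan z$.

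The first --- and main --- step is the combinatorial recurrence
\[
  2A_{n+1}=\sum_{k=0}^{n}\binom{n}{k}A_k\,A_{n-k}\qquad(n\ge 1),\qquad A_0=A_1=1 .
\]
To prove it I would count the alternating permutations of $\{1,\dots,n+1\}$ of \emph{both} the up-down and the down-up type: when $n+1\ge 2$ the complementation $\sigma_i\mapsto n+2-\sigma_i$ exchanges the two types, so there are $2A_{n+1}$ of them in total. Given such a permutation, split it at its maximal entry as $\sigma'\,(n+1)\,\sigma''$; since $n+1$ is the global maximum it necessarily occupies a peak. If $\sigma'$ has length $k$, then choosing which of the $n$ remaining values go into $\sigma'$ contributes $\binom{n}{k}$, and for each such choice both $\sigma'$ and $\sigma''$ must themselves be alternating permutations of lengths $k$ and $n-k$, whose up-down/down-up types are forced by $k$ and by the global type. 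Since the number of alternating permutations of a given length and type is $A_k$ (respectively $A_{n-k}$), with the empty block counted by $A_0=1$, summing over $0\le k\le n$ and over the two global types yields the recurrence.

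Next I would turn the recurrence into an ODE. Multiplying by $z^{n}/n!$ and summing over $n\ge 1$ makes the right-hand side $y(z)^2-A_0^2=y(z)^2-1$ (a Cauchy product) and the left-hand side $2\bigl(y'(z)-A_1\bigr)=2y'(z)-2$, so $2y'(z)=1+y(z)^2$ with $y(0)=A_0=1$. Separating variables gives $2\arctan y=z+C$ with $C=2\arctan 1=\pi/2$, hence $y(z)=\tan\!\bigl(\tfrac z2+\tfrac\pi4\bigr)$; expanding $\tan(\tfrac\pi4+\tfrac z2)$ and using the half-angle identity $\tan(z/2)=\sin z/(1+\cos z)$ collapses this to $(1+\sin z)/\cos z=\sec z+\tan z$. (One can skip the separation of variables by checking directly that $f(z)=\sec z+\tan z$ satisfies $f(0)=1$ and $2f'=1+f^2$, using $(\sec z+\tan z)^2=2\sec z(\sec z+\tan z)-1$.) To upgrade this to a genuine identity I would invoke uniqueness of the power-series solution of $2y'=1+y^2$, $y(0)=1$ (whose coefficients are determined recursively) together with analyticity of $\sec z+\tan z$ on $\{|z|<\pi/2\}$: the series $\sum_n b_n(z)$ is then precisely the Taylor series of $\sec z+\tan z$ and converges to it there. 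As a consistency check, splitting by parity, the even part $\sum_{m}a_{2m}z^{2m}$ satisfies $\cos z\cdot\sum_m a_{2m}z^{2m}=1$ --- exactly the recurrence for the $a_{2m}$ quoted above --- so it equals $\sec z$, and likewise the odd part equals $\tan z$; evaluated at $z=i\log C_G$ these encode the statements that $\sum_m T_{2m}^{\infty}=\Ct$ and that $\sum_m R_{2m+1}^{\infty}$ equals $\Cr$ up to sign.

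The main obstacle is the combinatorial recurrence, specifically the bookkeeping in the split-at-the-maximum argument: one has to check that deleting $n+1$ really leaves two \emph{alternating} blocks, identify which up-down/down-up type each block is forced to have as a function of $k$ and of the global type, and treat the degenerate blocks ($k=0$ and $k=n$) correctly, so that everything assembles into the clean binomial convolution displayed above. Once that is settled, the remaining steps --- forming the generating function, solving the separable ODE, the half-angle simplification, and the analyticity/uniqueness argument for convergence --- are routine.
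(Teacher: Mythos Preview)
The paper does not prove this theorem; it is stated as a classical result and attributed to Andr\'e \cite{andre1881}. Your proposal supplies a complete and correct proof along the standard lines --- the split-at-the-maximum bijection giving $2A_{n+1}=\sum_{k}\binom{n}{k}A_kA_{n-k}$, conversion to the Riccati equation $2y'=1+y^2$ with $y(0)=1$, and identification of the solution as $\sec z+\tan z$ --- which is essentially Andr\'e's own argument. The bookkeeping you flag as the main obstacle (that removing $n{+}1$ leaves two alternating blocks whose types are forced, with the two global types together covering all parities of $k$) is exactly right, and your treatment of the $n=0$ boundary term and the convergence/uniqueness on $|z|<\pi/2$ is clean. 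Your parity consistency check recovers the paper's recursion for the $a_{2m}$ and connects directly to Corollary~\ref{trequiv}.
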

Comparison of this result with our series \eqref{Tnterm}-\eqref{Rnterm}
leads immediately to
\begin{align}
\sum_{n=1}^\infty T_n^\infty + \sum_{n=1}^\infty R_n^\infty = C_G \sec(i\log(C_G)) + i \tan(i\log(C_G)).
\end{align}
Further comparing with the expressions for the transmission and reflection
coefficients yields
\begin{corollary} \label{trequiv}
Let $e^{-\pi} < Z_+/Z_- < e^\pi$.  Then
\begin{subequations} \label{TandRsums}
\begin{align}
\sum_{m=0}^\infty T_{2m}^\infty & = \Ct(Z_+,Z_-) = C_G \sech(\log(C_G)), \label{Tsum} \\
\sum_{m=0}^\infty R_{2m+1}^\infty& = \Cr(Z_+,Z_-) = \tanh(\log(C_G)).
\end{align}
\end{subequations}
\end{corollary}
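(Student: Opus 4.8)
The plan is to read off the Corollary directly from the preceding Theorem together with André's Theorem, reserving the real care for the series manipulation that the hypothesis $e^{-\pi} < Z_+/Z_- < e^\pi$ is there to license. Set $z = i\log(C_G)$; since $Z_\pm>0$ the quantities $C_G$ and $\log C_G$ are real, and the stated bound on $Z_+/Z_-$ is exactly the statement $|z| = |\log C_G| = \tfrac12\bigl|\log(Z_+/Z_-)\bigr| < \pi/2$. Thus $z$ lies strictly inside the disc of convergence of the Maclaurin series $\sum_{n\ge 0} b_n(z) = \sec z + \tan z$ (whose radius of convergence is $\pi/2$, the distance from the origin to the nearest poles of $\sec$ and $\tan$). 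On that disc the series converges absolutely, so I may split it into its even and odd parts; since $\sec$ is even and $\tan$ is odd, this gives $\sum_{m\ge0} b_{2m}(z) = \sec z$ and $\sum_{m\ge0} b_{2m+1}(z) = \tan z$. Inserting the closed forms $T_{2m}^\infty = C_G\,b_{2m}(z)$ and $R_{2m+1}^\infty = i\,b_{2m+1}(z)$ from \eqref{Tnterm}--\eqref{Rnterm} and summing over $m$ yields $\sum_m T_{2m}^\infty = C_G\sec(i\log C_G)$ and $\sum_m R_{2m+1}^\infty = i\tan(i\log C_G)$, i.e. the even and odd halves of the combined identity displayed just before the Corollary.

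It then remains to pass to hyperbolic form and match the rational closed forms. Using $\cos(i\theta)=\cosh\theta$ and $\tan(i\theta)=i\tanh\theta$, the first sum becomes $C_G\sech(\log C_G)$ and the second becomes $\pm\tanh(\log C_G)$; a short sign-tracking computation fixes the sign and, comparing with the definitions \eqref{Ctrans}--\eqref{Crefl}, also pins down the order in which $Z_-$ and $Z_+$ appear as arguments of $\Ct$ and $\Cr$. Finally I would verify the explicit rational expressions by elementary algebra: with $C_G^2 = Z_+/Z_-$ one has $C_G\sech(\log C_G) = 2C_G/(C_G+C_G^{-1})\cdot C_G = 2Z_+/(Z_-+Z_+)$ and $\tanh(\log C_G) = (C_G^2-1)/(C_G^2+1) = (Z_+-Z_-)/(Z_-+Z_+)$, which are precisely the transmission and reflection coefficients.

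The one genuinely load-bearing step — and the reason the hypothesis on $Z_+/Z_-$ cannot be dropped — is the term-by-term even/odd decomposition of the series, which is valid exactly because $i\log C_G$ sits inside the radius of convergence $\pi/2$ of $\sec+\tan$. I expect this to be the main ``obstacle'' in the sense that it is where a reader might want justification: outside the stated range the individual series $\sum_m T_{2m}^\infty$ and $\sum_m R_{2m+1}^\infty$ diverge (the coefficients $A_n/n!$ grow like $(2/\pi)^n$), even though $\Ct$ and $\Cr$ themselves stay perfectly finite, so the identities \eqref{TandRsums} really do fail there and the bound $e^{-\pi}<Z_+/Z_-<e^\pi$ is sharp. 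Everything past that point is bookkeeping: tracking signs, noting that the logarithm is the ordinary real logarithm, and the elementary algebra above, none of which I would grind through here in detail.
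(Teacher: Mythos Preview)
Your proposal is correct and follows essentially the same route as the paper: identify the hypothesis $e^{-\pi}<Z_+/Z_-<e^\pi$ with the condition $|\log C_G|<\pi/2$ so that the Maclaurin series converges, sum the series to $\sech$/$\tanh$ form, and then check algebraically that these agree with the rational expressions for $\Ct$ and $\Cr$ using $C_G^2=Z_+/Z_-$. The only cosmetic difference is that the paper recognizes $\sum_m \frac{A_{2m}}{(2m)!}(i\log C_G)^{2m}$ directly as the $\sech$ Maclaurin series (proving only the transmission half in detail), whereas you invoke Andr\'e's theorem for the combined $\sec+\tan$ series and then split off even and odd parts by absolute convergence; these are the same argument.
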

\begin{proof}
We prove the transmission coefficient part; the proof for the reflection
coefficient is similar.
From \eqref{Tnterm} we have
$$
    \sum_{m=0}^\infty T_{2m}^\infty = C_G \sum_{m=0}^\infty \frac{A_{2m}}{(2m)!} (i\log(C_G))^{2m} = C_G\sech(\log(C_G)).
$$
This is the Maclaurin
series for $\sech(z)$ with $z=\log(C_G)$; the sequence is convergent for $|z| < \pi/2$,
which is equivalent to the condition $e^{-\pi} < Z_+/Z_- < e^\pi$.
Meanwhile, we can express the transmission coefficient in terms of the
Green's coefficient as follows:
\begin{align*}
    \Ct(Z_+,Z_-) & = \frac{2Z_+}{Z_+ + Z_-}  = \frac{2 C_G^2}{C_G^2+1}.
\end{align*}
Substituting $z=-i\log(C_G)$
(so $C_G=e^{iz}$) we find
\begin{align*}
    \Ct(Z_+,Z_-) & = \frac{2e^{2iz}}{e^{2iz}+1} = e^{iz}\sec(z) = C_G \sec(-i\log(C_G)) = C_G \sech(\log(C_G)).
\end{align*}
\end{proof}
Corollary \ref{trequiv} gives simple expressions for the transmission and
reflection coefficients in terms of the Green's coefficient.  It also says that
if we add up all the long-time asymptotic
contributions from paths with any even number of reflections, we obtain the
same value given by the transmission coefficient.  Similarly, if we add up all
contributions from paths with any odd number of reflections,
we obtain the same value as the reflection coefficient.
Thus the asymptotic state near $x=0$ for the reflection of the
step is just the middle state resulting from the Riemann problem.
In fact, \cref{trequiv} could instead be proven directly, using
PDE-based arguments to show that the net effect of all terms
asymptotically depends only on $Z_+, Z_-$ and so must sum to the
traditional transmission and reflection coefficients.


\new{
\begin{remark}
    The connection between paths and the zigzag numbers can be seen also in
the following way.  The path with reflection points $\{x_1, \dots, x_n\}$
can be associated with a tree, essentially as is done in
\cite[Section 2.3]{gibson2014combinatorics}, where each node corresponds to
a reflection point.  For each admissible ordering of the reflection points,
the resulting tree (with nodes labeled in the order in which they are visited)
is an increasing 0-1-2 tree, and it can be shown that all increasing 0-1-2
trees of $n$ nodes correspond to some ordering of the reflection points.
The number of increasing 0-1-2 trees with $n$ nodes is known to be equal
to $A_n$ \cite{callan2009note}.
\end{remark}
}

\subsection{Convergence\label{sec:convergence}}
\new{
In this section we consider the convergence of the series \eqref{solution-series}.
The analysis above and the examples in Section \ref{sec:examples} provide
evidence that the series \eqref{solution-series} derived in the previous sections
approximates the solution of the initial boundary value problem.
On the other hand, in the long-time limit, the series may diverge
for large impedance ratios.  Here we show that the series
\eqref{solution-series} always converges for any finite time.
The main result is Theorem \ref{main-theorem}; a more specialized result is given
in Theorem \ref{strong-theorem} that has much stronger assumptions but also much smaller
error bounds.

Let us extend formulas \eqref{TandR-integrals}, which approximate
solution values at $x=0$ and $x=x_+$ respectively, to approximate the solution
also for $x\in[0,x_+]$.  The same reasoning used in the previous sections leads
to the more general formulas
\begin{subequations} \label{char-series-def}
\begin{align} \label{Rchar-series-def}
    w^1_{2m+1}(x,t) := (-1)^m C_G(x) \int \cdots \int_{\Paths^{[0,x_+]}_{2m}} \prod_{j=1}^{2m} r(x_j) dx_j \int_{\max(x,x_{2m})}^{x_\righ} p_0(\xi_R(\bx,x,t)) r(x_{2m+1}) dx_{2m+1} \\
    w^2_{2m}(x,t) := (-1)^m C_G(x) \int \cdots \int_{\Paths^{[0,x_+]}_{2m-1}} \prod_{j=1}^{2m-1} r(x_j) dx_j \int_0^{\min(x,x_{2m-1})} p_0(\xi_T(\bx,x,t)) r(x_{2m}) dx_{2m}. \label{Tchar-series-def}
\end{align}
\end{subequations}
where
$$
    C_G(x) = \sqrt{Z(x)/Z_-}.
$$
Note that
\begin{align*}
    w^1_{2m+1}(x=0,t) & = R_{2m+1}(t) &
    w^2_{2m}(x=x_+,t) & = T_{2m}.
\end{align*}
\begin{lemma} \label{lem:w1w2}
Let $p_0(x), Z(x) \in C^1$.  Let $w^1_{2m+1}, w^2_{2m}$ be defined by \eqref{char-series-def} for $m\ge 0$ and
define $w^1_{-1}(x,t)=0$.  Then their partial derivatives exist and satisfy
    \begin{subequations}
    \begin{align}
        (w^1_{2m+1})_t - c(x) (w^1_{2m+1})_x & = r(x)\left(w^1_{2m+1}-w^2_{2m}\right) \\
        (w^2_{2m})_t + c(x) (w^2_{2m})_x & = r(x)\left(w^2_{2m}-w^1_{2m-1}\right).
    \end{align}
    \end{subequations}
\end{lemma}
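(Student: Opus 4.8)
The plan is to prove Lemma~\ref{lem:w1w2} by differentiating the iterated-integral representations \eqref{char-series-def} directly and applying the Leibniz rule, then recognizing the terms that appear. Write $w^1_{2m+1}(x,t)=(-1)^m C_G(x)\,J_{2m+1}(x,t)$ and $w^2_{2m}(x,t)=(-1)^m C_G(x)\,J_{2m}(x,t)$, where $J_{2m+1}$ and $J_{2m}$ denote the iterated integrals in \eqref{Rchar-series-def} and \eqref{Tchar-series-def}. Two elementary facts organize the calculation. First, $C_G(x)=\sqrt{Z(x)/Z_-}$ satisfies $C_G'(x)=r(x)C_G(x)$ with $r$ as in \eqref{inf-refl-coeff}, so differentiating the prefactor reproduces $w^1_{2m+1}$ (resp.\ $w^2_{2m}$) itself, times $r(x)$. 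Second, the starting-point maps are constant along the relevant characteristic families: since $\partial_t\xi_R=-c_-$ and $\partial_x\xi_R=-c_-/c(x)$ (using $\tau'(x)=1/c(x)$ on $[0,x_+]$), one has $(\partial_t-c(x)\partial_x)\xi_R=0$, and symmetrically $(\partial_t+c(x)\partial_x)\xi_T=0$; hence when a transport operator is applied under the integral sign, every term arising from differentiating $p_0(\xi_R)$ (resp.\ $p_0(\xi_T)$) cancels.

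The upshot is that $(\partial_t-c(x)\partial_x)w^1_{2m+1}$ has only two nonzero contributions: the $C_G'$ term, proportional to $w^1_{2m+1}$; and a boundary term from the $x$-dependent lower limit $\max(x,x_{2m})$ of the innermost integral. The Leibniz rule for that limit evaluates the innermost integrand at $x_{2m+1}=x$ and, because $\tfrac{d}{dx}\max(x,x_{2m})$ is the indicator of $\{x>x_{2m}\}$, restricts the outer integration to $x_{2m}\in[0,\min(x,x_{2m-1})]$ with $\{x_1,\dots,x_{2m-1}\}$ still ranging over $\Paths^{[0,x_+]}_{2m-1}$. To identify this boundary term I would use the path identity
$$
\xi_R\big(\{x_1,\dots,x_{2m},x\},x,t\big)=\xi_T\big(\{x_1,\dots,x_{2m}\},x,t\big),
$$
which follows in one line from the definition \eqref{t-hat} of $\traveltime$ and the additivity of $\tau$ (geometrically: a reflected path whose last turning point coincides with the observation point is a transmitted path with one fewer reflection arriving at $x$). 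With this substitution the boundary term becomes a scalar multiple of $w^2_{2m}(x,t)$, namely the integral defining $w^2_{2m}$ in \eqref{Tchar-series-def}; combining it with the $C_G'$ term assembles into the claimed right-hand side for $w^1_{2m+1}$. The equation for $w^2_{2m}$ is handled symmetrically, with $\min(x,x_{2m-1})$ replacing $\max(x,x_{2m})$ and the companion identity $\xi_T(\{x_1,\dots,x_{2m-1},x\},x,t)=\xi_R(\{x_1,\dots,x_{2m-1}\},x,t)$ producing $w^1_{2m-1}$; the case $m=0$ is subsumed under the conventions $\Paths_0=\{\emptyset\}$ (unit mass) and $w^1_{-1}\equiv 0$, since then the boundary term at $x_{2m}=x_0=0$ still reduces to $w^2_0$. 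As a consistency check, summing the two families of identities over $m$ telescopes to the coupled system \eqref{advection-coupled}.

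I expect the main obstacle to be bookkeeping rather than anything conceptual: one must keep careful track of which of the nested limits of integration depend on $x$, differentiate the $\max$/$\min$ endpoint correctly (its derivative is the indicator that carves out the appropriate lower-dimensional slice of the outer integral), and check that after the relabeling $x_{2m+1}\mapsto x$ the remaining integral is literally the defining integral of $w^2_{2m}$. The hypothesis $p_0,Z\in C^1$ (together with continuity of $c$, which makes $\tau\in C^1$) is exactly what is needed to justify differentiation under the integral sign and the Leibniz rule at the variable endpoints. The one step that genuinely uses the explicit forms of $\xi_R$, $\xi_T$ and $\traveltime$ is the short travel-time identity displayed above; that is the crux, and the rest is routine calculus.
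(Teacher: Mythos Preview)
Your approach is essentially the paper's own: direct computation via the Leibniz rule, using $C_G'(x)=r(x)C_G(x)$, the transport identities $(\partial_t\mp c(x)\partial_x)\xi_{R,T}=0$, and then identifying the boundary term from the $x$-dependent endpoint with the lower-order $w$ via the path/travel-time identity. The paper carries this out explicitly only for $m=0$ and declares the general case analogous, whereas you give the uniform argument for all $m$; the path identity $\xi_R(\{x_1,\dots,x_{2m},x\},x,t)=\xi_T(\{x_1,\dots,x_{2m}\},x,t)$ you isolate is exactly what makes the induction-free version work.
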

\begin{proof}
The proof is by direct computation.  We illustrate by taking $m=0$.  First,
note that $C_G'(x) = r(x) C_G(x)$ and $\tau'(x)=c(x)$.  Thus we have
\begin{align*}
    w^2_0(x,t) & = C_G(x) p_0(-c_-(t-\tau(x))) \\
    (w^2_0)_t & = -c_- C_G(x) p_0'(-c_-(t-\tau(x))) \\
    (w^2_0)_x & = r(x) w^2_0 + c_- c(x) C_G(x) p_0'(-c_-(t-\tau(x))),
\end{align*}
so that
\begin{align*}
    (w^2_0)_t + c(x) (w^2_0)_x & = r(x) w^2_0.
\end{align*}
Next we have
\begin{align*}
    w^1_1(x,t) & = C_G(x) \int_x^{x_+} r(x_1) p_0(-c_-(t-2\tau(x_1) + \tau(x)))dx_1 \\
    (w^1_1)_t & = -c_- C_G(x) \int_x^{x_+} r(x_1) p_0'(-c_-(t-2\tau(x_1) + \tau(x)))dx_1 \\
    (w^1_1)_x & = r(x) w^1_1 - r(x) w^2_0 + c_- c(x) C_G(x) \int_x^{x_+}r(x_1) p_0'(-c_-(t-2\tau(x_1)+\tau(x))) dx_1,
\end{align*}
so that
\begin{align*}
    (w^1_1)_t + c(x) (w^1_1)_x & = r(x) \left(w^1_1 - w^2_0\right).
\end{align*}
\end{proof}
We remark that this result extends in a natural way to more general initial data
by using the theory of distributions.

Let us define formally
\begin{subequations} \label{w1w2sums}
\begin{align}
    \wlim^1(x,t) & := \sum_{m=0}^\infty w^1_{2m+1}(x,t) \\
    \wlim^2(x,t) & := \sum_{m=0}^\infty w^2_{2m}(x,t).
\end{align}
\end{subequations}
Using Lemma \ref{lem:w1w2} and assuming for the moment that the sums converge,
we can formally write
\begin{align*}
    \wlim^1_t - c(x)\wlim^1_x & = r(x)\left(\wlim^1-\wlim^2\right) \\
    \wlim^2_t + c(x)\wlim^2_x & = r(x)\left(\wlim^2-\wlim^1\right).
\end{align*}
showing that the limiting functions $\wlim^1(x,t), \wlim^2(x,t)$ satisfy \eqref{advection-coupled}
and therefore give the solution to the wave equation \eqref{acoustics}.
It remains to show that $\wlim^1, \wlim^2$ exist.
To this end, we require the following Lemma whose proof is deferred to the appendix.
\begin{lemma} \label{volume-lemma-2}
Consider the set
\begin{align} \label{paths-t-def}
    \Paths_n^{[0,x_\righ]}(t) := \{\bx \in [0,x_\righ]^n : \bx \text{ is an alternating sequence and }2\sum_j(-1)^{j+1}\tau_j \le t.\}.
\end{align}
Then
$$
    \Vol(\Paths_n(t)^{[0,x_\righ]}) \le \frac{t^n (\max_x c(x))^n}{n!}.
$$
\end{lemma}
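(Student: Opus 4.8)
The plan is to pass to travel-time coordinates, in which the set becomes a subset of a standard ordered simplex. First I would substitute $y_j=\tau(x_j)$, where $\tau(x)=\int_0^x ds/c(s)$ is the (increasing) travel time from $0$ to $x$; then $\tau'(x)=1/c(x)$, so $dx_j=c(x_j)\,dy_j\le(\max_x c(x))\,dy_j$. Since $\tau$ is monotone, $\bx\in[0,x_+]^n$ corresponds to $(y_1,\dots,y_n)\in[0,\tau(x_+)]^n$, the alternating property is preserved, and $2\sum_j(-1)^{j+1}\tau_j=2\sum_j(-1)^{j+1}y_j$. This extracts the factor $(\max_x c(x))^n$, so it remains to show that the set $S=\{(y_1,\dots,y_n)\in[0,\tau(x_+)]^n:\ (y_j)\text{ alternating and }2\sum_j(-1)^{j+1}y_j\le t\}$ has volume at most $t^n/n!$.

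Next I would introduce the reflection-time coordinates $t_k=2\sum_{j<k}(-1)^{j+1}y_j+(-1)^{k+1}y_k$, so that $t_k$ is the time, measured from when the path crosses $x=0$, at which it reaches its $k$-th reflection point. The map $(y_j)\mapsto(t_k)$ is linear with lower-triangular matrix having $\pm1$ on the diagonal, hence volume-preserving; and $t_{k+1}-t_k=(-1)^{k+1}(y_k-y_{k+1})$ (with $t_0=0$), so the chain $0\le t_1\le t_2\le\cdots\le t_n$ is \emph{equivalent} to $(y_j)$ being alternating. For a reflected path ($n$ odd) the final segment returns to $x=0$ and the round-trip time is $\traveltime(\bx)=t_n+y_n\ge t_n$, so the hypothesis $\traveltime(\bx)\le t$ forces $t_n\le t$. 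Therefore the image of $S$ under $(y_j)\mapsto(t_k)$ lies inside the simplex $\{0\le t_1\le\cdots\le t_n\le t\}$, of volume $t^n/n!$, and this case is finished.

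The remaining --- and I expect harder --- case is the transmitted one ($n$ even), where instead $\traveltime(\bx)=t_n-y_n$, so $t_n$ can exceed $t$; equivalently, dropping the upper bound $y_j\le\tau(x_+)$ leaves $S$ unbounded when $n$ is even, so the box $[0,\tau(x_+)]^n$ must be used essentially. I would handle this by adjoining the arrival time $t_{n+1}=\traveltime(\bx)+\tau(x_+)$ as one more unimodular coordinate, putting the image inside $\{0\le t_1\le\cdots\le t_n\le t_{n+1}\}$, and combining the ordering with the box constraint; alternatively, the folding substitution $z_j=\tau(x_+)-y_j$ for even $j$ converts the alternating inequalities into $z_j+z_{j+1}\ge\tau(x_+)$ and the time constraint into a thin slab of the resulting polytope whose volume can be bounded directly. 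Reconciling the box constraint with the clean $t^n/n!$ bound in this even case is the step I anticipate will require the most care; once the two changes of variables are in place, the reflected case is essentially immediate.
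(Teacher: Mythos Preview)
Your approach is essentially the paper's: the map $(y_j)\mapsto(t_k)$ is the partial-sum version of the paper's map $f_i(\bx)$ (indeed $t_k-t_{k-1}=f_k$), so mapping into the ordered simplex $\{0\le t_1\le\cdots\le t_n\le t\}$ is the same as mapping into the nonnegative part of the $L_1$-ball $\{\|f\|_1\le t\}$. The paper handles general $c$ by the containment $\Paths_n(t)\subset\Xset_n(Ct)$ (path length $\le Ct$) rather than by bounding the Jacobian $dx_j=c(x_j)\,dy_j$, but these are equivalent maneuvers.

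Your caution about even $n$ is well placed, and in fact the paper glosses over exactly this point. The identity it records, $\|f(\bx)\|_1=\traveltime(\bx)-x_n$, holds only for odd $n$; for even $n$ one has $\|f(\bx)\|_1=\traveltime(\bx)+x_n$, and the stated bound $t^n/n!$ can fail. For example, with $c\equiv1$, $n=2$, the set $\{0\le x_2\le x_1\le x_+,\ 2(x_1-x_2)\le t\}$ has area $x_+t/2-t^2/8$ when $t\le 2x_+$, which exceeds $t^2/2$ once $x_+>5t/4$. So your instinct that the box constraint $y_j\le\tau(x_+)$ is essential in the even case is correct, and the clean $t^n/n!$ bound is not recoverable from the hypotheses of the lemma alone. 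The good news is that in the only place the lemma is invoked with even $n$ (bounding $w^2_{2m}$ in Lemma~\ref{uniform-lemma}), the integration domain carries the extra constraints $x_{2m}\le x$ and $\traveltime\le t-\tau(x)$, whence $t_{2m}=\traveltime+y_{2m}\le(t-\tau(x))+\tau(x)=t$ and the simplex argument goes through. So rather than trying to rescue the even case of the lemma as stated, the cleanest fix is to fold that endpoint constraint into the statement.
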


\begin{lemma} \label{uniform-lemma}
Consider problem \eqref{acoustics} with $C^1$ coefficients \eqref{coefficients} and
initial data \eqref{initial-data}.  Let $w^{1,2}_n$ be defined as in \eqref{char-series-def}
and let $t<\infty$ be fixed.  Then the sums \eqref{w1w2sums} are uniformly
convergent, as are the sums
\begin{align}
    & \sum_{m=0}^\infty \frac{\partial}{\partial x} w^1_{2m+1}(x,t) & & \sum_{m=0}^\infty \frac{\partial}{\partial t} w^1_{2m+1}(x,t), \\
    & \sum_{m=0}^\infty \frac{\partial}{\partial x} w^2_{2m}(x,t) & & \sum_{m=0}^\infty \frac{\partial}{\partial t} w^2_{2m}(x,t).
\end{align}
Furthermore, we have
\begin{align*}
    \left|\sum_{n=N+1}^\infty w^1_{2m+1}(x,t)\right| & \le M C_G(x) \frac{(\zeta C(t+\tau(x)))^{2N+2}}{(2N+2)!}\sinh(\zeta Ct_*) \\
    \left|\sum_{n=N+1}^\infty w^2_{2m}(x,t)\right| & \le M C_G(x) \frac{(\zeta C(t-\tau(x)))^{2N+2}}{(2N+2)!}\cosh(\zeta Ct_*)
\end{align*}
for some $t_*\in[0,t]$, where
\begin{align*}
    C & = \max_x |c(x)|, &
    \zeta & = \max_x \frac{|Z'(x)|}{2Z(x)}, &
    M & = \max_x |p_0(x)|.
\end{align*}
\end{lemma}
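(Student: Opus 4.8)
The plan is to dominate every term of the series \eqref{w1w2sums}, and of the four derivative series, by the general term of a convergent numerical series whose bound is uniform in $x\in[0,x_+]$, and then invoke the Weierstrass $M$-test; the explicit tail estimates will follow from Taylor's theorem. Fix $t$ and $x$. In \eqref{char-series-def} bound $|p_0|\le M$ and each $|r(x_j)|\le\zeta$ and factor out $C_G(x)$; what remains is $\zeta^n$ (with $n=2m+1$ or $n=2m$) times the volume of the domain of integration. The decisive observation is that this domain is effectively far smaller than $\Paths_n^{[0,x_+]}$: by \eqref{xi_R} the factor $p_0(\xi_R(\bx,x,t))$, with $\xi_R=-c_-(t-\traveltime(\bx)+\tau(x))$, vanishes unless $\traveltime(\bx)\le t+\tau(x)$, and by \eqref{xi_T} the factor $p_0(\xi_T(\bx,x,t))$ vanishes unless $\traveltime(\bx)\le t-\tau(x)$. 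Hence the integrand is supported on $\Paths_{2m+1}^{[0,x_+]}(t+\tau(x))$ (for $w^1_{2m+1}$) or on $\Paths_{2m}^{[0,x_+]}(t-\tau(x))$ (for $w^2_{2m}$), and Lemma \ref{volume-lemma-2} gives $|w^1_{2m+1}(x,t)|\le MC_G(x)(\zeta C(t+\tau(x)))^{2m+1}/(2m+1)!$ and $|w^2_{2m}(x,t)|\le MC_G(x)(\zeta C(t-\tau(x)))^{2m}/(2m)!$ with $C=\max_x|c(x)|$. It is precisely the factorial supplied by Lemma \ref{volume-lemma-2} --- absent from the naive bound $MC_G(x)(\zeta x_+)^n$ --- that forces convergence at every finite time no matter how large $\zeta x_+$ is; this is the heart of the matter.

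Summing over $m$ and using that $C_G(x)$ and $\tau(x)$ are bounded on $[0,x_+]$ (the latter by the crossing time $t_+$), each $m$-th bound is at most a constant independent of $x$ whose series sums to $M\overline{C}_G\sinh(\zeta C(t+t_+))$ (respectively $M\overline{C}_G\cosh(\zeta C(t+t_+))$), with $\overline{C}_G=\max_x C_G(x)$; the $M$-test then yields the asserted uniform convergence of $\wlim^1$ and $\wlim^2$. For the tail bounds, observe that $\sum_{m>N}z^{2m+1}/(2m+1)!$ and $\sum_{m>N}z^{2m}/(2m)!$ are exactly the Lagrange remainders at order $2N+2$ of the Maclaurin expansions of $\sinh$ and $\cosh$; since $\sinh^{(2N+2)}=\sinh$ and $\cosh^{(2N+2)}=\cosh$, they equal $z^{2N+2}\sinh(\theta)/(2N+2)!$ and $z^{2N+2}\cosh(\theta)/(2N+2)!$ for some $\theta$ between $0$ and $z$. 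Taking $z=\zeta C(t+\tau(x))$ and $z=\zeta C(t-\tau(x))$, respectively, and writing $\theta=\zeta Ct_*$ produces the two stated error bounds.

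For the derivative series, recall from Lemma \ref{lem:w1w2} that the termwise partial derivatives exist; they are computed by differentiating \eqref{char-series-def} under the integral sign, where $x$ and $t$ enter only through $p_0(\xi_R)$, $p_0(\xi_T)$, the prefactor $C_G(x)$, and the variable limits $\max(x,x_{2m})$ and $\min(x,x_{2m-1})$. Differentiating $p_0(\xi)$ replaces $p_0$ by $p_0'$ times a bounded factor ($\pm c_-$ for $\partial_t$, $\pm c_-c(x)$ for $\partial_x$); since $\xi_R,\xi_T$ stay in a bounded interval on the support of the integrand, $\sup|p_0'|$ there is finite (here we use $p_0\in C^1$), so these contributions obey bounds of the same exponential/factorial form as above. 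Differentiating $C_G$ gives $C_G'=rC_G$, i.e.\ a factor $r(x)$ times an already-bounded term. Differentiating a variable limit with respect to $x$ produces a boundary term in which one reflection point is frozen at the value $x$: again an integral of a bounded function against $n$ reflection coefficients over an alternating, travel-time-restricted set, now of dimension $n-1$, hence bounded by $MC_G(x)\zeta^n(\zeta C(t+\tau(x)))^{n-1}/(n-1)!$. Thus each of the four derivative series is termwise dominated, uniformly in $x\in[0,x_+]$, by a convergent series, so it converges uniformly by the $M$-test; combined with the convergence of $\wlim^1,\wlim^2$, this legitimizes term-by-term differentiation and shows that the sums have the claimed derivatives.

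I expect the conceptual core to be the observation that the support of the integrand confines the integration to the travel-time-restricted set of Lemma \ref{volume-lemma-2}, which is what produces the essential $1/n!$ factor; the main technical nuisance will be the careful bookkeeping of the boundary terms that arise when differentiating in $x$ through the variable limits $\max(x,x_{2m})$ and $\min(x,x_{2m-1})$.
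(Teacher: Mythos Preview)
Your proposal is correct and follows essentially the same route as the paper: restrict the domain of integration to the travel-time-limited set $\Paths_n(t\pm\tau(x))$ using the support condition on $p_0$, apply Lemma~\ref{volume-lemma-2} to extract the crucial $1/n!$, and then invoke the Weierstrass $M$-test together with Taylor's remainder for $\sinh$ and $\cosh$. Your treatment of the derivative series is in fact more detailed than the paper's (which simply states the final bounds); one minor slip is the bound on the boundary term from differentiating the variable limit, which should read $MC_G(x)\,\zeta\,(\zeta C(t+\tau(x)))^{n-1}/(n-1)!$ rather than $MC_G(x)\,\zeta^{n}\,(\zeta C(t+\tau(x)))^{n-1}/(n-1)!$, but this does not affect the convergence argument.
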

\begin{proof}
We bound the magnitude of each term of each series by the product of the volume of integration
(using Lemma \ref{volume-lemma-2}) and the maximum magnitude of the integrand.
Notice that in the limits of integration for \eqref{Rchar-series-def} we can replace $\Paths_n$ by $\Paths_n(t+\tau(x))$ since
for paths outside the latter set we have $p_0(\xi_R(\bx,x,t))=0$.  Thus
\begin{align*}
    |w^1_{2m+1}(x,t)| & = C_G(x) \left|\int \cdots \int_{\Paths^{[0,x_+]}_{2m}(t+\tau(x))} \prod_{j=1}^{2m} r(x_j) dx_j \int_{\max(x,x_{2m})}^{x_\righ} p_0(\xi_R(\bx,x,t)) r(x_{2m+1}) dx_{2m+1}\right| \\
    & \le C_G(x) \int \cdots \int_{\Paths^{[0,x_+]}_{2m+1}(t+\tau(x))} |p_0(\xi_R(\bx,x,t))| \prod_{j=1}^{2m+1} |r(x_j)| dx_j \\
    & \le M C_G(x) \frac{(\zeta C (t+\tau(x))^{2m+1}}{(2m+1)!}.
\end{align*}
Therefore
\begin{align*}
    \sum_{m=0}^\infty |w^1_{2m+1}(x,t)| \le \sum_{m=0}^\infty M C_G(x) \frac{(\zeta C (t+\tau(x))^{2m+1}}{(2m+1)!} = M C_G(x) \sinh(\zeta C (t+\tau(x)).
\end{align*}
Similarly, we obtain
\begin{align*}
    \sum_{m=0}^\infty |w^2_{2m}(x,t)| \le M C_G(x) \cosh(\zeta C (t-\tau(x))) \\
    \sum_{m=0}^\infty \left|\frac{\partial}{\partial t} w^1_{2m+1}(x,t)\right| \le c_- D C_G(x) \sinh(\zeta Ct) \\
    \sum_{m=0}^\infty \left|\frac{\partial}{\partial t} w^2_{2m}(x,t)\right| \le c_- D C_G(x) \cosh(\zeta Ct) \\
    \sum_{m=0}^\infty \left|\frac{\partial}{\partial x} w^1_{2m+1}(x,t)\right| \le (|r(x)| M + c_- D) C_G(x) \sinh(\zeta Ct) \\
    \sum_{m=0}^\infty \left|\frac{\partial}{\partial x} w^2_{2m}(x,t)\right| \le (|r(x)|M + c_- D) C_G(x) \cosh(\zeta Ct),
\end{align*}
where $D = \max_x |p_0'(x)|$.
The error bounds in the theorem then follow from Taylor's theorem.
\end{proof}

\begin{remark}
Using \eqref{volume-lemma} without replacing $x_+$ in the arguments
above leads to estimates that are independent of $t$ but blow up when
$\max Z(x) / \min Z(x)$ is too large.
\end{remark}

Finally, we obtain
\begin{theorem} \label{main-theorem}
Consider problem \eqref{acoustics} with $C^1$ coefficients \eqref{coefficients} and
initial data \eqref{initial-data}.  Let $w^{1,2}_n$ be defined as in \eqref{char-series-def}
and let $t<\infty$ be fixed.  Let
$$
    \plim(x,t) = \sum_{m=0}^\infty w^1_{2m+1}(x,t) + \sum_{m=0}^\infty w^2_{2m}(x,t).
$$
Then $\plim(x,t)$ is the solution of the initial value problem.
\end{theorem}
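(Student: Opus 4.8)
The plan is to assemble the pieces already established: Lemma~\ref{uniform-lemma} supplies the analytic regularity needed to manipulate the series, Lemma~\ref{lem:w1w2} supplies the differential identity satisfied by each term, and the conclusion follows from uniqueness for the underlying hyperbolic problem.

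First I would invoke Lemma~\ref{uniform-lemma}: for fixed $t$, the partial sums of $\wlim^1$ and $\wlim^2$ converge uniformly on $[0,x_+]$, and so do the partial sums of their $x$- and $t$-derivatives. By the standard theorem on term-by-term differentiation of a uniformly convergent series of $C^1$ functions, $\wlim^1$ and $\wlim^2$ are $C^1$ on $[0,x_+]\times[0,t]$ and their partial derivatives are obtained by differentiating \eqref{w1w2sums} term by term. Next I would sum the two identities of Lemma~\ref{lem:w1w2} over $m\ge0$: moving the derivatives inside the sums by the previous step, using $w^1_{-1}\equiv0$ and reindexing, and using the absolute convergence of $\sum_m w^1_{2m+1}$ and $\sum_m w^2_{2m}$, the source terms collapse to $r(x)(\wlim^1-\wlim^2)$ and $r(x)(\wlim^2-\wlim^1)$ respectively. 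Hence $(\wlim^1,\wlim^2)$ solves the coupled system \eqref{advection-coupled} on $[0,x_+]$, and therefore $\plim=\wlim^1+\wlim^2$ (together with the velocity $(\wlim^2-\wlim^1)/Z$) solves \eqref{acoustics} there.

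It remains to identify the initial and boundary data. At $t=0$, for every admissible reflection sequence $\bx$ the argument $\xi_R(\bx,x,0)=c_-(\traveltime(\bx)-\tau(x))$ is nonnegative — on an alternating sequence of length $2m+1$ one has $\traveltime(\bx)\ge 2\tau(x_{2m+1})\ge 2\tau(x)$, since $\traveltime=2\big((\tau_1-\tau_2)+\cdots+(\tau_{2m-1}-\tau_{2m})+\tau_{2m+1}\big)$ with each parenthesis nonnegative and the innermost reflection point satisfying $x_{2m+1}\ge x$ — and likewise $\xi_T(\bx,x,0)=c_-(\traveltime(\bx)+\tau(x))\ge0$; since $p_0$ vanishes on $[0,\infty)$, every term of \eqref{w1w2sums} vanishes at $t=0$, so $\wlim^1(x,0)=\wlim^2(x,0)=0$. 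The boundary values are equally direct: in \eqref{Rchar-series-def} the innermost integral over $[\max(x,x_{2m}),x_+]$ collapses at $x=x_+$, so $\wlim^1(x_+,t)=0$; in \eqref{Tchar-series-def} the innermost integral over $[0,\min(x,x_{2m-1})]$ collapses at $x=0$ for $m\ge1$, leaving only $w^2_0(0,t)=p_0(-c_-t)$, so $\wlim^2(0,t)=p_0(-c_-t)$. These are exactly the initial and boundary data for the well-posed problem \eqref{advection-coupled} on $[0,x_+]$ whose solution is the restriction of the Cauchy problem \eqref{acoustics}--\eqref{initial-data}; extending by the straight-line characteristics for $x<0$ and $x>x_+$ recovers the full solution. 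By uniqueness for the linear hyperbolic problem (see e.g.\ \cite{lax2006hyperbolic}), $\plim$ is that solution.

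The step I expect to be the main obstacle is the rigorous interchange of the infinite summation with differentiation, together with the index shifts in the summed recursion; both rest entirely on the uniform and absolute convergence of the derivative series provided by Lemma~\ref{uniform-lemma}, so once that lemma is in hand the remainder is bookkeeping. A secondary point needing care is the matching at the two interfaces $x=0$ and $x=x_+$, where the constant-coefficient characteristic solutions outside $[0,x_+]$ get pinned to the series through the collapsing limits of integration noted above.
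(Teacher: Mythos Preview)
Your proposal is correct and follows the same approach as the paper: invoke Lemma~\ref{uniform-lemma} for uniform convergence (of the series and their derivatives) and Lemma~\ref{lem:w1w2} for the termwise differential recursion, then conclude that $(\wlim^1,\wlim^2)$ satisfies \eqref{advection-coupled}. The paper's own proof is in fact much terser---it simply cites the two lemmas and stops---so your explicit verification of the initial data at $t=0$, the collapsing boundary values at $x=0$ and $x=x_+$, and the appeal to uniqueness fill in details the paper leaves implicit.
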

\begin{proof}
Lemma \ref{uniform-lemma} shows that $\plim(x,t)$ is well-defined and also
(with Lemma \ref{lem:w1w2}) that $\wlim^{1,2}$ satisfy \eqref{advection-coupled}.
\end{proof}
}

The error estimate given in Lemma \ref{uniform-lemma} is typically too large to be
useful.  As we will see in the examples of Section \ref{sec:examples}, the series often converges much faster.
The next theorem gives an example of conditions under which more rapid convergence
can be guaranteed.

\begin{theorem} \label{strong-theorem}
Consider problem \eqref{acoustics} with coefficients \eqref{coefficients} and
unit step function initial data \eqref{step}.
Let $R_n(t)$ and $T_n(t)$ be defined as in \eqref{TandR-integrals}.
Let $Z(x)$ be monotone with $e^{-2\sqrt{2}} < Z_+/Z_- < e^{2\sqrt{2}}$.  Then
for any time $0 \le t < \infty$ the following limits exist:
\begin{align}
\lim_{N\to\infty} \sum_{m=1}^N R_{2m+1}(t) \\
\lim_{N\to\infty} \sum_{m=1}^N T_{2m}(t).
\end{align}
Furthermore, the terms $|R_{2m+1}(t)|$ and $|T_{2m}(t)|$ decrease monotonically with
$m$ and the approximation error can be bounded as follows:
\begin{align}
\left| \sum_{m=N}^\infty R_{2m+1}(t) \right| & \le
                |R_{2N+1}(t)| \le \left(\frac{C_G^2}{2}\right)^{N} |R_1(t)| \\
\left| \sum_{m=N}^\infty T_{2m}(t) \right| & \le |T_{2N}(t)| \le
                \left(\frac{C_G^2}{2}\right)^{N} |T_0(t)|.
\end{align}
\end{theorem}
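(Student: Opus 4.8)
The plan is to reduce the whole statement to a single recursive inequality relating consecutive terms; combined with the fact that the terms of each series alternate in sign, this inequality yields monotonicity, convergence, the geometric bound and the sharp tail bound simultaneously.

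First I would pass to the logarithmic variable $y(x)=\tfrac12\log Z(x)$ used to obtain \eqref{simple-integral}. Assume for definiteness that $Z$ is increasing (the decreasing case is symmetric, changing only the sign of $r$ and not its magnitude; if $Z$ is not $C^1$, interpret $r(x)\,dx$ as $dy$ throughout and argue directly in $y$). Then $x\mapsto y$ is an increasing homeomorphism of $[0,x_+]$ onto $[y_-,y_+]$, it carries $r(x)\,dx$ to $dy$, and $y_+-y_-=\log C_G=:z$. Writing $\eta_j:=y(x_j)$ and inserting the step-function reductions \eqref{reflection-integral-step}--\eqref{transmission-integral-step} and \eqref{stepfun_t}--\eqref{stepfun_t_trans}, a change of variables gives
\[
|R_{2m+1}(t)|=\Vol(\mathcal{R}_{2m+1}(t)),\qquad |T_{2m}(t)|=C_G\,\Vol(\mathcal{T}_{2m}(t)),
\]
where $\mathcal{R}_n(t)$ (resp.\ $\mathcal{T}_n(t)$) is the set of \emph{alternating} sequences $\eta\in[y_-,y_+]^n$ whose associated path has arrival time $\traveltime(\bx)\le t$ (resp.\ $\traveltime(\bx)+t_+\le t$), with $\traveltime$ as in \eqref{t-hat}. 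Because $r\ge 0$ when $Z$ increases, the integrands in \eqref{reflection-integral-step}--\eqref{transmission-integral-step} are nonnegative, so moreover $\operatorname{sign}R_{2m+1}(t)=(-1)^m$ and $\operatorname{sign}T_{2m}(t)=(-1)^m$.

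The heart of the argument is the pair of volume estimates
\[
\Vol(\mathcal{R}_{2m+3}(t))\le\tfrac{z^2}{2}\,\Vol(\mathcal{R}_{2m+1}(t)),\qquad \Vol(\mathcal{T}_{2m+2}(t))\le\tfrac{z^2}{2}\,\Vol(\mathcal{T}_{2m}(t)).
\]
To prove the first I would project $\mathcal{R}_{2m+3}(t)$ onto its first $2m+1$ coordinates. The image lies in $\mathcal{R}_{2m+1}(t)$: a prefix of an alternating sequence is alternating, and \eqref{t-hat} gives $\traveltime(x_1,\dots,x_{2m+3})=\traveltime(x_1,\dots,x_{2m+1})+2(\tau_{2m+3}-\tau_{2m+2})$, where the correction is nonnegative because $x_{2m+3}\ge x_{2m+2}$ is exactly the alternating constraint at the odd index $2m+3$ and $\tau$ is increasing. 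Over any point of $\mathcal{R}_{2m+1}(t)$ the fibre is contained in $\{\,y_-\le\eta_{2m+2}\le\eta_{2m+1},\ \eta_{2m+2}\le\eta_{2m+3}\le y_+\,\}$ (dropping the time constraint only enlarges it), a region of area $\int_{y_-}^{\eta_{2m+1}}(y_+-\eta_{2m+2})\,d\eta_{2m+2}=za-\tfrac12a^2$ with $a:=\eta_{2m+1}-y_-\in[0,z]$, hence at most $z^2/2$. Fubini then gives the claim. The transmitted estimate is identical, now using $\traveltime(x_1,\dots,x_{2m+2})=\traveltime(x_1,\dots,x_{2m})+2(\tau_{2m+1}-\tau_{2m+2})\ge\traveltime(x_1,\dots,x_{2m})$ (here $x_{2m+1}\ge x_{2m+2}$ because the index $2m+2$ is even) and the same fibre bound.

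It remains to assemble the conclusions. The hypothesis $e^{-2\sqrt2}<Z_+/Z_-<e^{2\sqrt2}$ is precisely $|z|<\sqrt2$, i.e.\ $q:=\tfrac{z^2}{2}=\tfrac{(\log C_G)^2}{2}<1$; this $q$ is the ratio that appears in the error bounds. The volume estimates give $|R_{2(m+1)+1}(t)|\le q\,|R_{2m+1}(t)|$ and $|T_{2(m+1)}(t)|\le q\,|T_{2m}(t)|$, so the magnitudes decrease monotonically in $m$ and, by induction, $|R_{2N+1}(t)|\le q^{N}|R_1(t)|$ and $|T_{2N}(t)|\le q^{N}|T_0(t)|$. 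Since the terms of $\sum_m R_{2m+1}(t)$ and of $\sum_m T_{2m}(t)$ alternate in sign (as observed above) and decrease in magnitude, they converge — indeed absolutely, being dominated by geometric series — which gives the two stated limits (the tails from $m=1$), and the Leibniz estimate yields $|\sum_{m\ge N}R_{2m+1}(t)|\le|R_{2N+1}(t)|$ and $|\sum_{m\ge N}T_{2m}(t)|\le|T_{2N}(t)|$. Together these are the full statement. (The cases $t=0$, where every reflected term vanishes, and $t\le t_+$, where every transmitted term vanishes, are trivial.)

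The step I expect to be the real obstacle is the pair of volume estimates: one has to recognise that the correct reduction is to delete the \emph{last two} reflection points, and then check that this single operation is simultaneously compatible with the down--up combinatorics (a prefix stays alternating) and monotone for the step-function arrival time — the favourable sign of the correction term being handed to us, in each case, by precisely the alternating inequality available at the relevant index. Once this is in hand, the two-dimensional fibre-area computation and the bookkeeping of the last step are routine.
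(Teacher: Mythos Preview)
Your proof is correct and follows essentially the same route as the paper's: both obtain the key recursive estimate $|T_{2m+2}(t)|\le q\,|T_{2m}(t)|$ (and its reflected analogue) by projecting the admissible $(n{+}2)$-point paths onto their first $n$ coordinates, bounding the two-dimensional fibre by $z^2/2$, and then concluding via the alternating-series test. You are somewhat more explicit than the paper about why the projection respects the arrival-time constraint (the favourable sign of the $\traveltime$ correction), and your identification $q=(\log C_G)^2/2$ is what actually matches both the fibre integral and the hypothesis $|\log(Z_+/Z_-)|<2\sqrt2$.
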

Note that our assumption on the impedances gives $|\log(Z_+/Z_-)| < {2\sqrt{2}},$  and hence
we have $\frac{1}{2}C_G^2 < 1$ (and approaching $1/2$ as $Z_+/Z_- \rightarrow 1$), giving
exponentially fast convergence.
\begin{proof}
From \eqref{TandR-integrals} we see that if $Z(x)$ is monotone then the series $R_{2m+1}$
and $T_{2m}$ are alternating series (i.e., successive terms in each series have opposite sign).
It is sufficient to prove that the terms $|R_{2m+1}(t)|$ and $|T_{2m}(t)|$
decrease monotonically with $m$; then the rest of the theorem follows
from standard results for alternating series.
We prove convergence of the transmission series $T_{2m}(t)$.  The proof for the
reflection series is similar.
For simplicity, we consider the case in which $Z(x)$ is increasing.

Let $m$ and $t$ be fixed and let $Z(x)$ be as stated in the Theorem.
As discussed already $T_{2m}(t)$ is given by integrating over $\Paths_{2m}(t+t_+)$.
For clarity, in the remainder of the proof we write $\Paths_{2m}$ with no argument;
it is implicitly $t+t_+$.
\begin{align*}
    |T_{2m}(t)| & = \left|C_G \int \int \cdots \int_{\bx \in \Paths_{2m}} \prod_{j=1}^{2m} r(x_j) dx_j\right| \\
             & = |C_G| \int \int \cdots \int_{\bx \in \Paths_{2m}} \prod_{j=1}^{2m} |r(x_j)| dx_j.
\end{align*}
The second equality holds because, since
$Z(x)$ is monotone, the integrand has the same sign for all paths.
This also means that if $\Paths_{2m}$ is replaced by a larger set of paths, the resulting integral
provides an upper bound on $|T_{2m}(t)|$.

Notice that every path in $\Paths_{2m+2}$ can be obtained in exactly one
way by taking a particular path in $\Paths_{2m}$ and appending two (admissible)
reflection points $x_{2m+1}, x_{2m+2}$.  Admissibility of the resulting path involves
a restriction in the total path length (travel time $\traveltime(\bx)\le t$) and the condition that
$x_{2m+1} \ge \max(x_{2m}, x_{2m+2})$.  Let us consider the larger set $\widehat{\Paths}_{2m+2}$
obtained by omitting the path length restriction and requiring only that $x_{2m+1} \ge x_{2m+2}$.
In other words, $\widehat{\Paths}_{2m+2}$ is obtained by appending, for each path in $\Paths_{2m}$,
all pairs $(x_{2m+1}, x_{2m+2})$ such that $0 \le x_{2m+1} \le x_{2m+2} \le x_+$.
Clearly $\Paths_{2m+2}\subset\widehat{\Paths}_{2m+2}$, so we have
\begin{align*}
    |T_{2m+2}(t)| & = \left|C_G \int \int \cdots \int_{\bx \in {\Paths}_{2m+2}} \prod_{j=1}^{2m+2} r(x_j) dx_j\right| \\
    & \le \left|C_G \int \int \cdots \int_{\bx \in \widehat{\Paths}_{2m+2}} \prod_{j=1}^{2m+2} r(x_j) dx_j\right| \\
    & = |C_G| \int \int \cdots \int_{\bx \in \widehat{\Paths}_{2m+2}} \prod_{j=1}^{2m+2} |r(x_j)| dx_j \\
        & = |T_{2m}(t)| \int_{0}^{x_+} \int_{x_{m+2}}^{x_+} r(x_{2m+2}) r(x_{2m+1}) dx_{2m+1} dx_{2m+2} \\
        & = |T_{2m}(t)| \cdot \frac{1}{2} C_G^2.
\end{align*}

Since $|\log(Z_+/Z_-)| < {2\sqrt{2}},$ we have $\frac{1}{2}C_G^2 < 1$, so $|T_{2m+2}(t)| < |T_{2m}(t)|$,
so the alternating series is convergent.
\end{proof}

\subsection{Examples}\label{sec:examples}
In this section we illustrate, through numerical examples, the method just proposed.
\new{For comparison, we compute reference solutions using the finite volume solver Clawpack \cite{clawpack}.
Code for reproducing these results is available online.\footnote{\url{https://github.com/ketch/characteristics_rr}}
In each case the reference solution is computed with a discretization
sufficiently fine so that further
refinement produces no visible change in the solution.}

We take $x_+=1$ in all examples.
In the first three examples we take the functions $c(x), Z(x)$ to be linear in
the interval $(0,1)$:
\begin{align}
(c(x), Z(x)) & = \begin{cases} (c_-, Z_-) & x < 0 \\
                               ((1-x)c_- + x c_+, (1-x)Z_- + x Z_+) & 0\le x \le 1 \\
                               (c_+, Z_+) & x > 1. \end{cases}
\end{align}
Let $s = c_+ - c_-$.  Then
a right-going characteristic starting from $x=0$ at $t=0$ satisfies the ODE
\begin{align}
    X'(t) = c(x) & = (1-x)c_- + x c_+ & X(0)=0,
\end{align}
with solution
\begin{align}
    X(t) = \frac{c_-}{s}(e^{st}-1).
\end{align}
The total time to cross from $x=0$ to $x=x_+$ is thus
$$t_+ = \frac{1}{s}\log\left(\frac{s}{c_-} +1\right).$$

For each example, we show the solution corresponding to an initial step function
($p_0(x)=1$ for all $x<0$) \new{and a Dirac $\delta$-function ($p_0(x)=\delta(x)$)}.
For the $\delta$-function examples, the $\delta$-function part of the
transmitted wave is represented by a larger red circle that also indicates the mass
of the transmitted $\delta$-function.

A first example, with very mild variation in $Z$, is shown in \cref{fig:example1}.
The solution involving only terms up to $T_2$ is already highly accurate.
In the second example, shown in \cref{fig:example2}, $Z$ varies by a factor of 8.
In this case it can be seen that the approximation using terms up to $T_4$ gives a
significant improvement.

Both of the previous examples satisfy the conditions given in Theorem \ref{strong-theorem}.
The next two examples do not.
In the third example, we take $Z_-=1$ and $Z_+=20$.  It can be seen
that in this case the convergence for large times is much slower and the
series including terms up to $T_4$ is a good approximation only for short
times.

In the final example, $Z(x)$ is non-monotone:
$$
Z(x) = 0.25 + 0.75x + \sin(10\pi x)/10.
$$
The solution given by including terms up to $T_4$ captures the oscillating
solution well.  This example also illustrates that when $Z(x)$ is a non-monotone
function, the transmitted wave amplitude can exceed $C_G$ at some points.

\new{\begin{remark}
Although we have focused on media with continuous coefficients $Z(x), c(x)$,
it is possible to extend this approach to piece-wise continuous media
by incorporating the effect of reflection and transmission at points of
discontinuity. \cref{fig:piecewise} shows the approximation $R_1(t)$ for an
example with $x_+=1$ and a single discontinuity at $x=1/2$.
Let $Z^{\pm}_{1/2}$ denote the impedance just to the left and right of $x=1/2$,
and assume that $Z(x)$ varies continuously over each interval $[0,1/2]$ and $[1/2,1]$.
Then the transmission and reflection coefficients are given by
$C_T(Z^-_{1/2},Z^+_{1/2})$, $C_R(Z^-_{1/2},Z^+_{1/2})$ for waves incident
from the left and by
$C_T(Z^+_{1/2},Z^-_{1/2})$, $C_R(Z^+_{1/2},Z^-_{1/2})$ for waves incident
from the right (see \eqref{discontTR}).
We can write $R_1(t) = R_1^\textup{cont}(t) + R_1^\textup{discont}(t)$
where the contribution from the discontinuity is given by the reflection coefficient. The
continuous contribution, $R_1^\textup{cont}$, is obtained by integrating over the
two continuous portions.
For a step function initial condition, this is
\begin{align}
R_1^\textup{cont}(t) & = \int_0^{min(1/2,X(t/2))}r(x)dx
 + C_T(Z^-_{1/2},Z^+_{1/2}) C_T(Z^+_{1/2},Z^-_{1/2}) \int_{1/2}^{max(1/2,X(t/2))}r(x)dx
\end{align}
which can also be written as
\begin{align}
R_1^\textup{cont}(t) &=
  \begin{cases}
\frac{1}{2} \log\left(\frac{Z(X(t/2))}{Z_-}\right)
       & \text{if } t < 2\tau(1/2) \\
   \frac{1}{2} \log\left(\frac{Z^-_{1/2}}{Z_-}\right)  + \frac{1}{2} C_T(Z^-_{1/2},Z^+_{1/2}) C_T(Z^+_{1/2},Z^-_{1/2})\log\left(\frac{Z(X(t/2))}{Z^+_{1/2}}\right)       & \text{otherwise}
  \end{cases}
\end{align}
The term with the two transmission coefficient factors accounts for the
paths that pass through the discontinuity (once in each direction).
The number of separate integrals that must be evaluated increases for higher-order
terms and for media with more discontinuities.
\end{remark}}

\begin{figure}
\begin{centering}
\subfloat[Step function initial condition.]{ \includegraphics[width=3in]{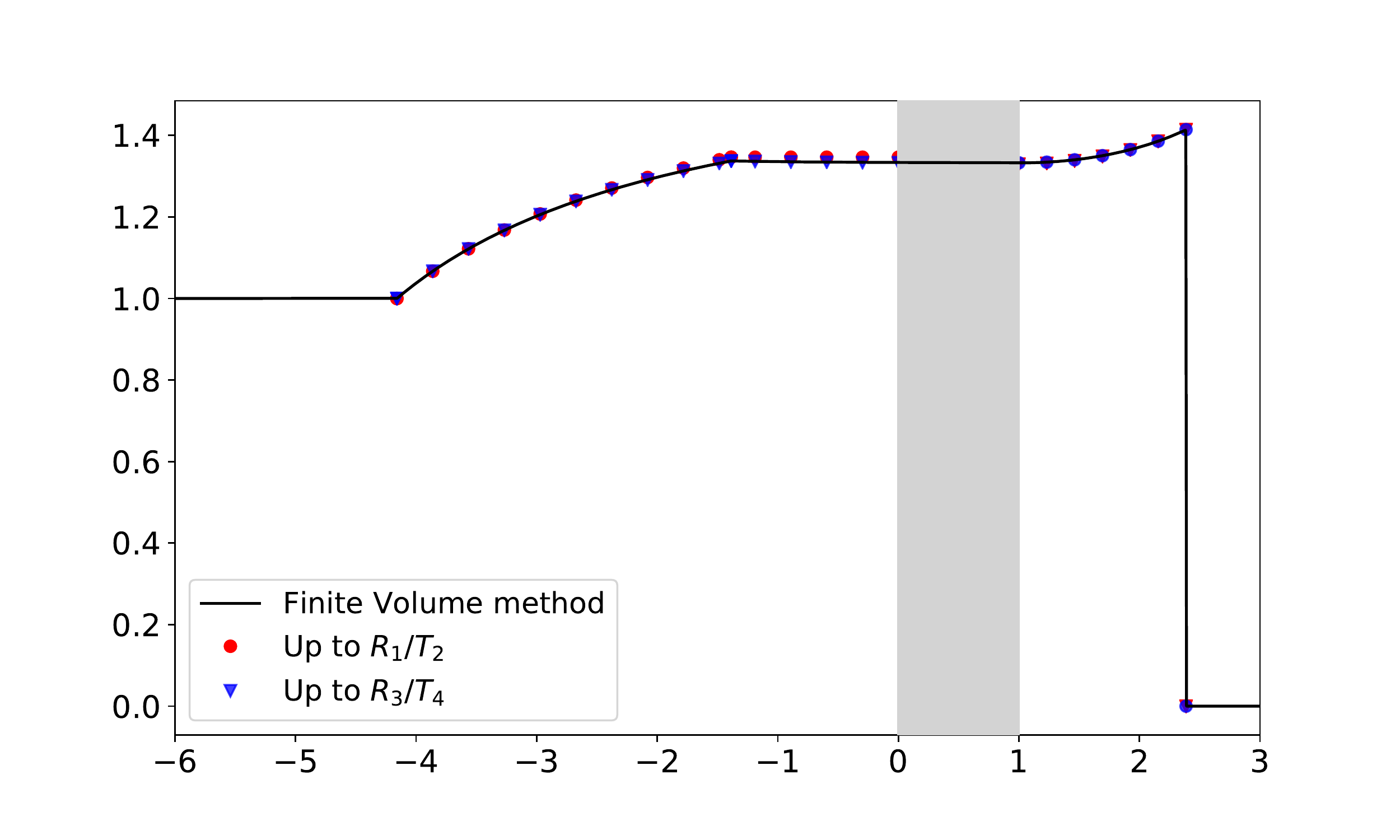}}
\subfloat[Dirac $\delta$-function initial condition.  Large red dot denotes location and mass of transmitted $\delta$-function.]{ \includegraphics[width=3in]{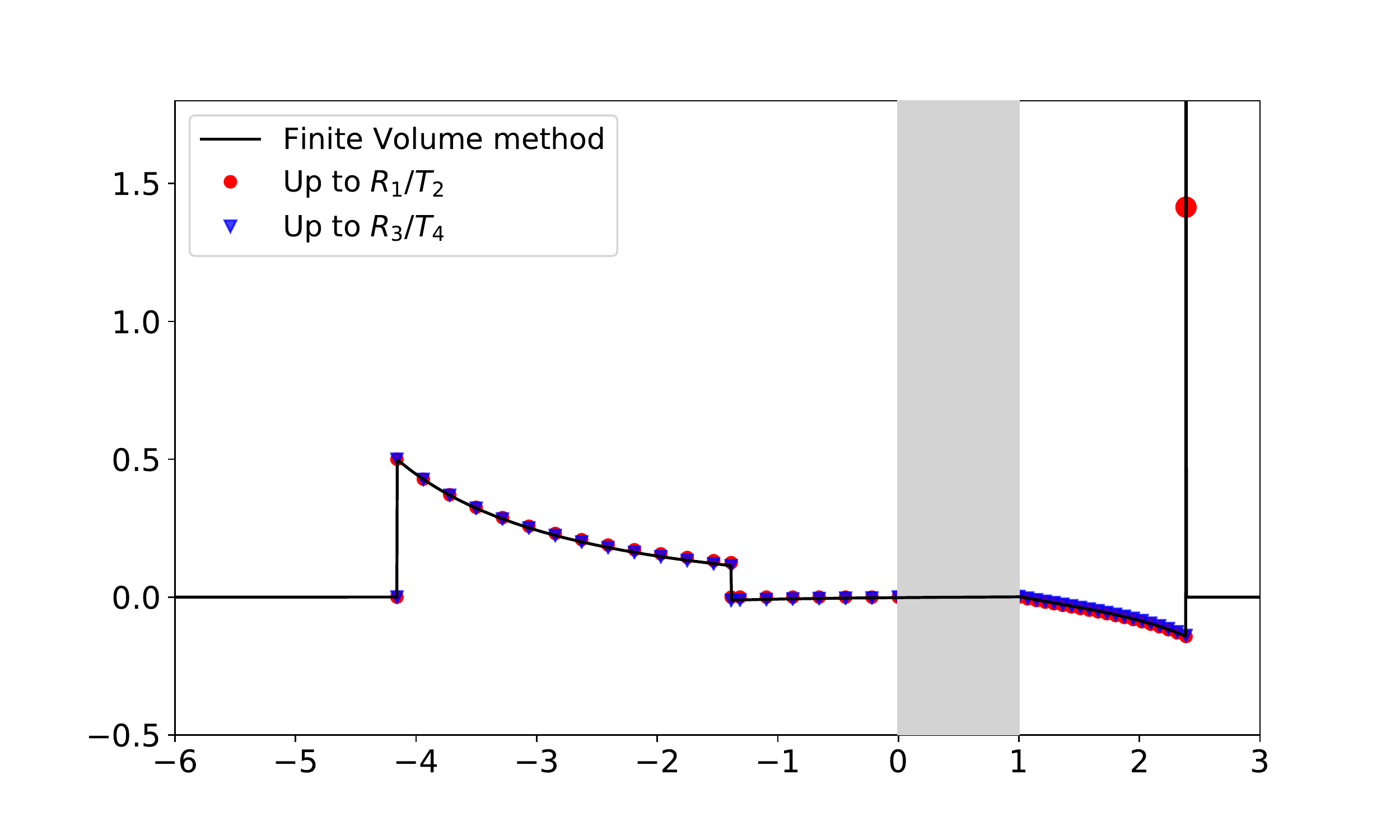}}
\caption{Solution at $t=3t_+$.  Here $x_+=1$, $c_-=2$, $c_+=1$, $Z_-=1/2$, and $Z_+=1$.
The solution is captured well by considering only two reflections.
\label{fig:example1}}
\end{centering}
\end{figure}

\begin{figure}
\begin{centering}
\subfloat[Step function initial condition.]{ \includegraphics[width=3in]{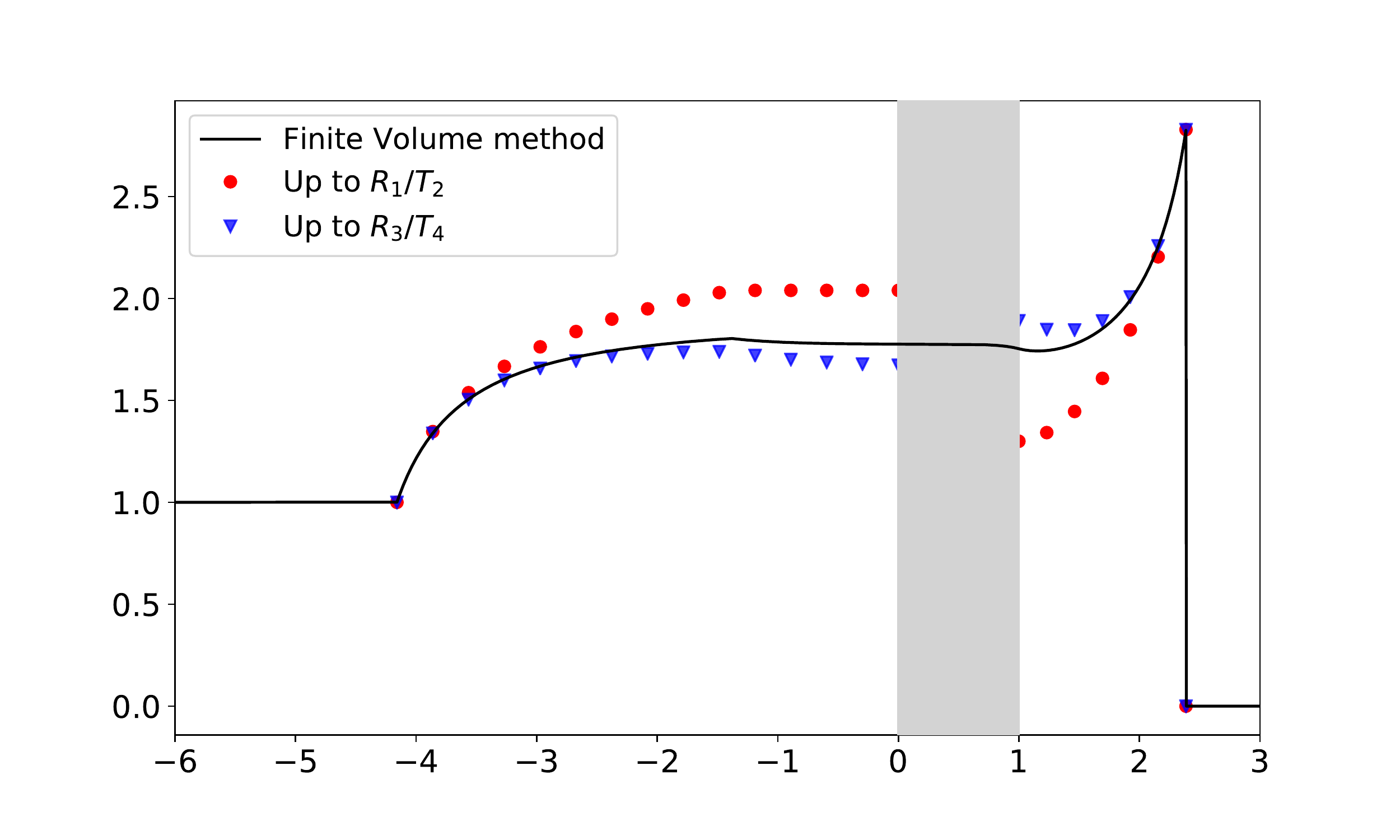}}
\subfloat[Dirac $\delta$-function initial condition.  Large red dot denotes location and mass of transmitted $\delta$-function.]{ \includegraphics[width=3in]{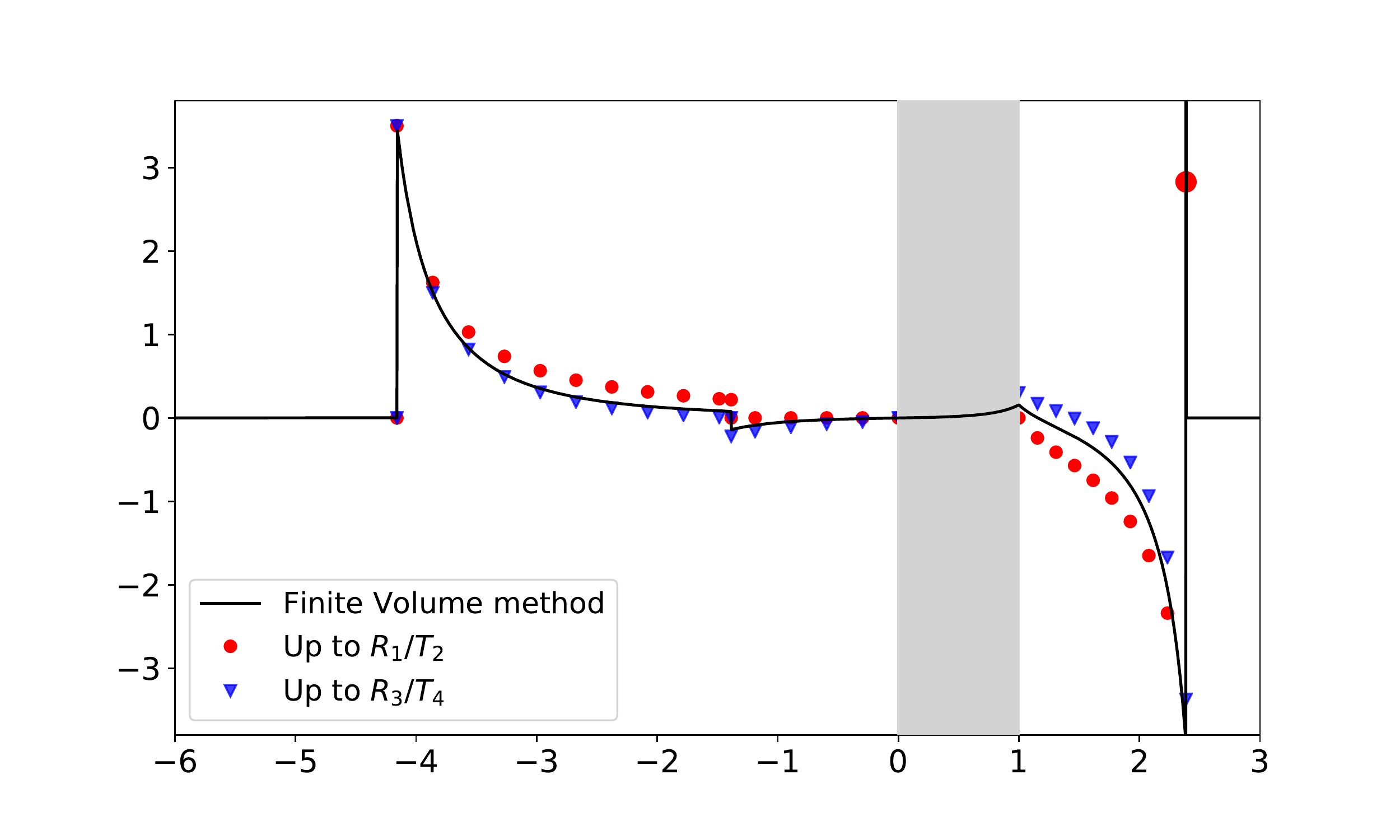}}
\caption{Solution at $t=3t_+$.  Here $x_+=1$, $c_-=2$, $c_+=1$, $Z_-=1/8$, and $Z_+=1$.
Using more reflections improves the accuracy of both the transmitted and reflected approximations.
\label{fig:example2}}
\end{centering}
\end{figure}

\begin{figure}
\begin{centering}
\subfloat[Step function initial condition.]{ \includegraphics[width=3in]{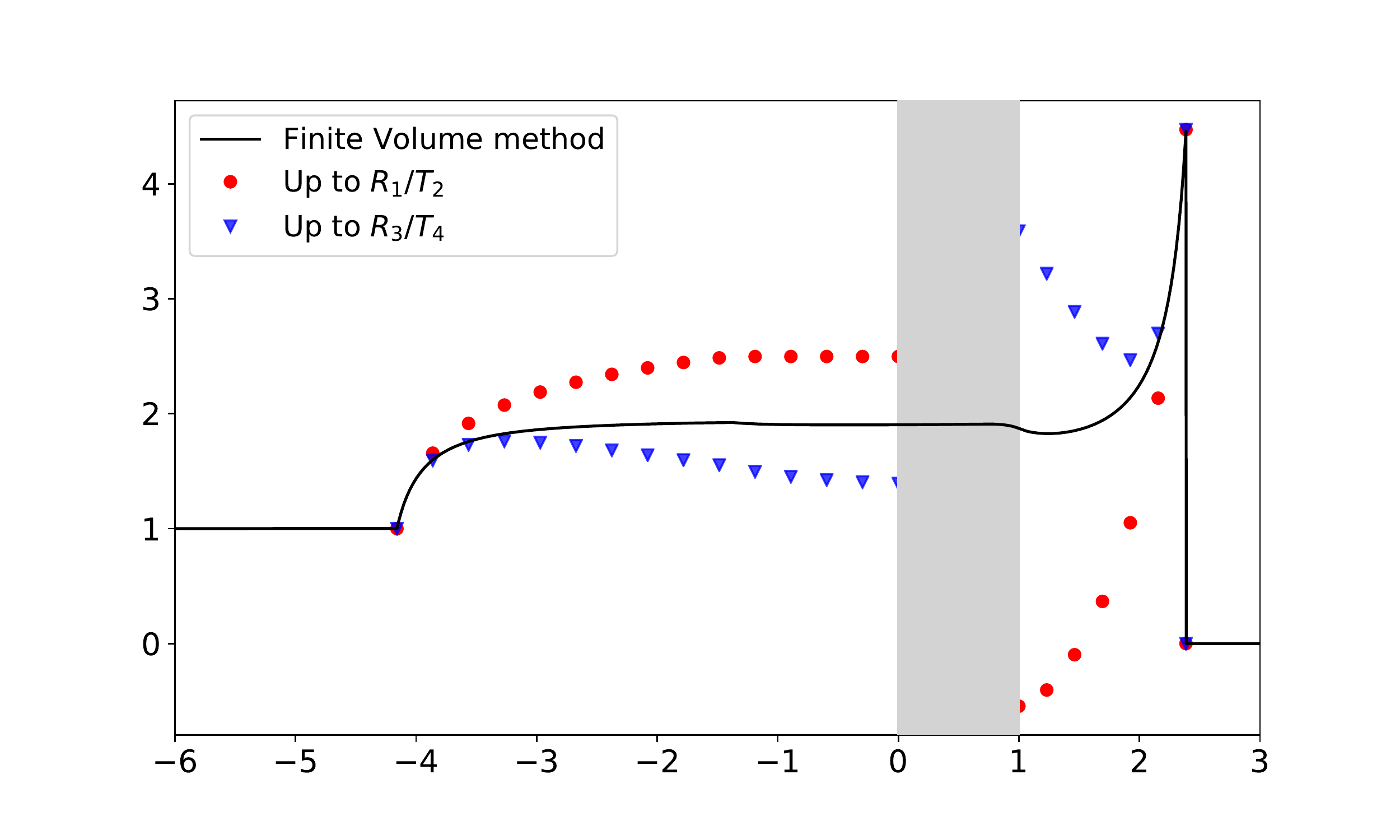}}
\subfloat[Dirac $\delta$-function initial condition.  Large red dot denotes location and mass of transmitted $\delta$-function.]{ \includegraphics[width=3in]{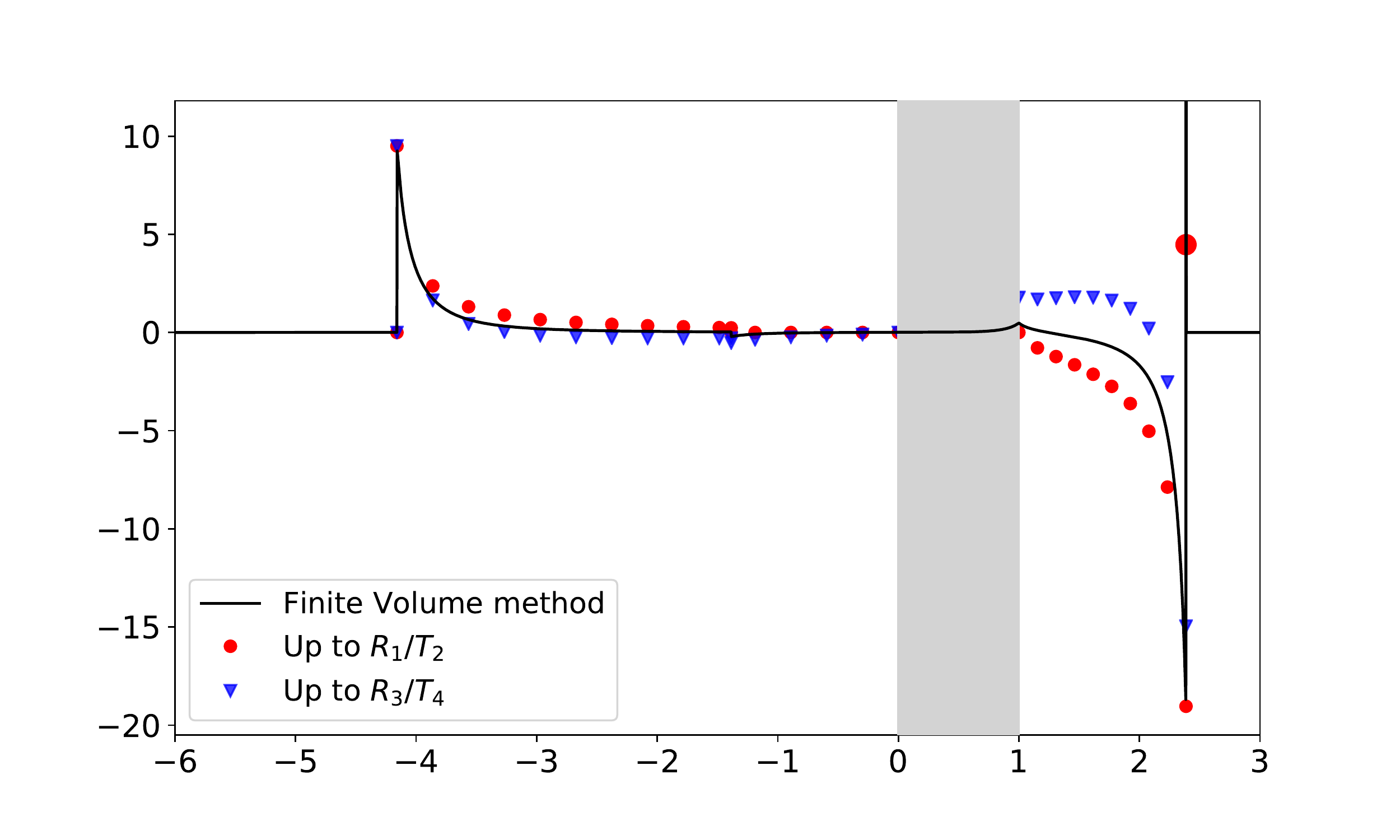}}
\caption{Solution at $t=3t_+$.  Here $x_+=1$, $c_-=2$, $c_+=1$, $Z_-=1$, and $Z_+=20$.
\label{fig:example3}}
\end{centering}
\end{figure}

\begin{figure}
\begin{centering}
\subfloat[Step function initial condition.]{ \includegraphics[width=3in]{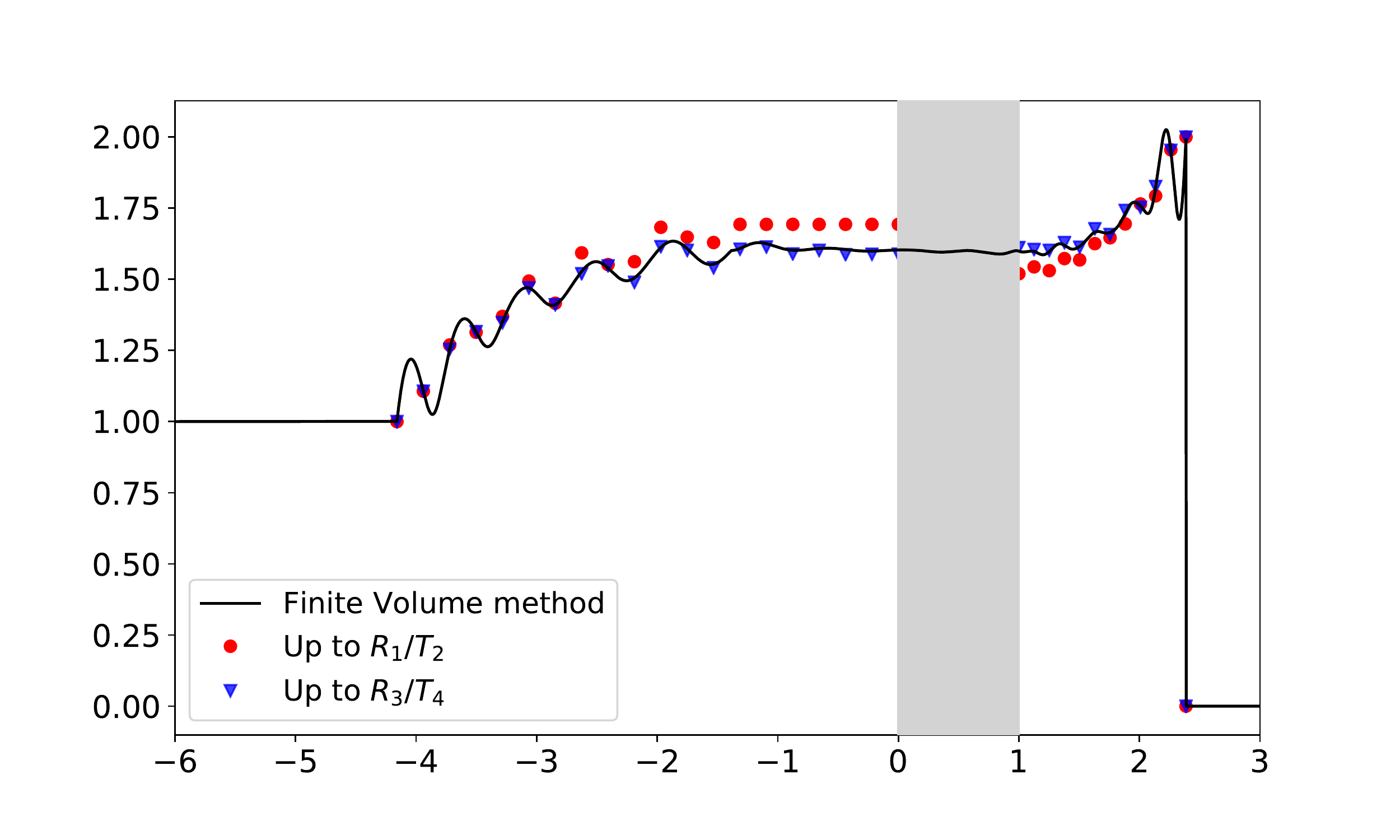}}
\subfloat[Dirac $\delta$-function initial condition.  Large red dot denotes location and mass of transmitted $\delta$-function.]{ \includegraphics[width=3in]{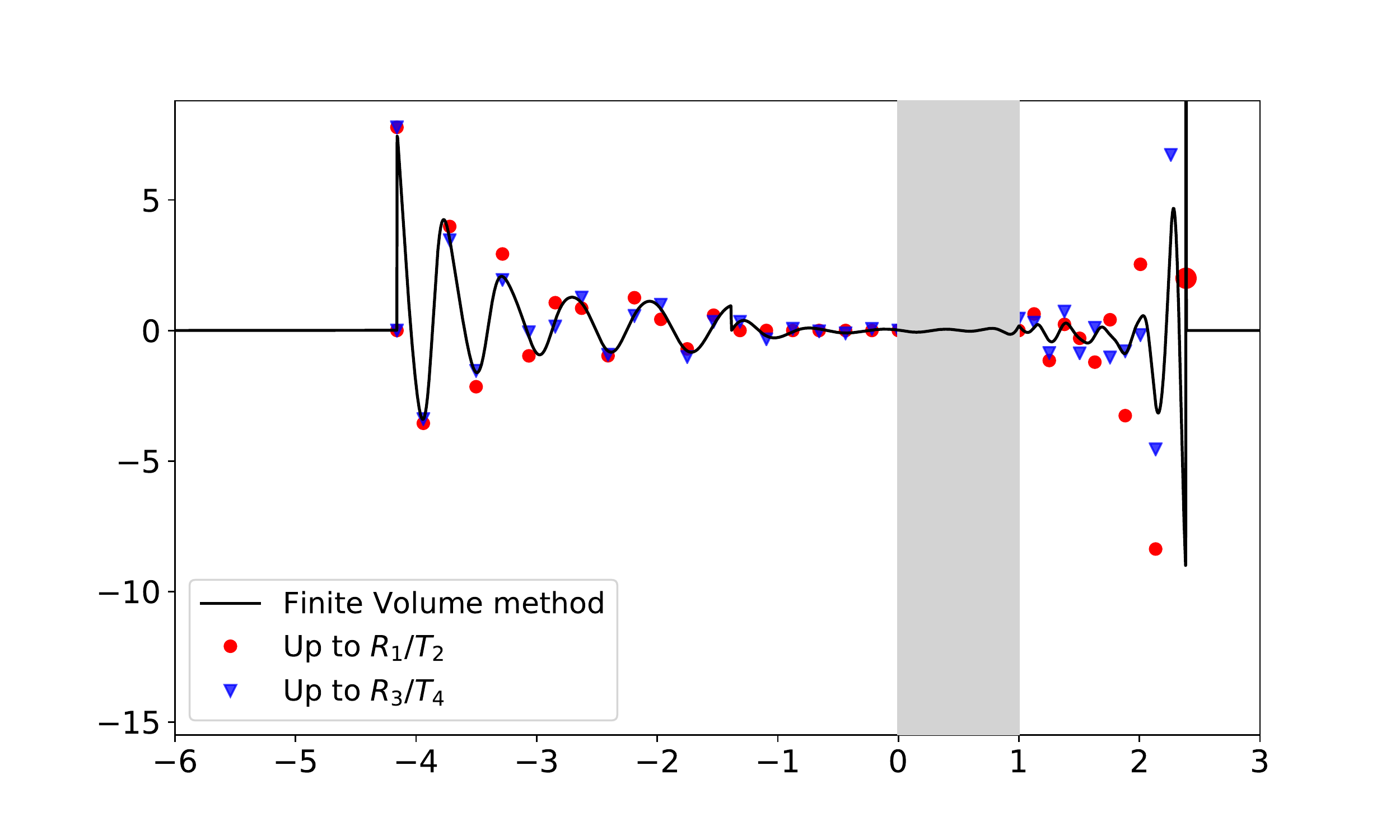}}
\caption{Solution at $t=3t_+$.  Here $x_+=1$, $c_-=2$, $c_+=1$, $Z_-=1$, and $Z_+=1/4$.
In the shaded region, $Z(x) = 0.25 + 0.75x + \sin(10\pi x)/10$.
\label{fig:example4}}
\end{centering}
\end{figure}

\begin{figure}
\begin{centering}
\includegraphics[width=4in]{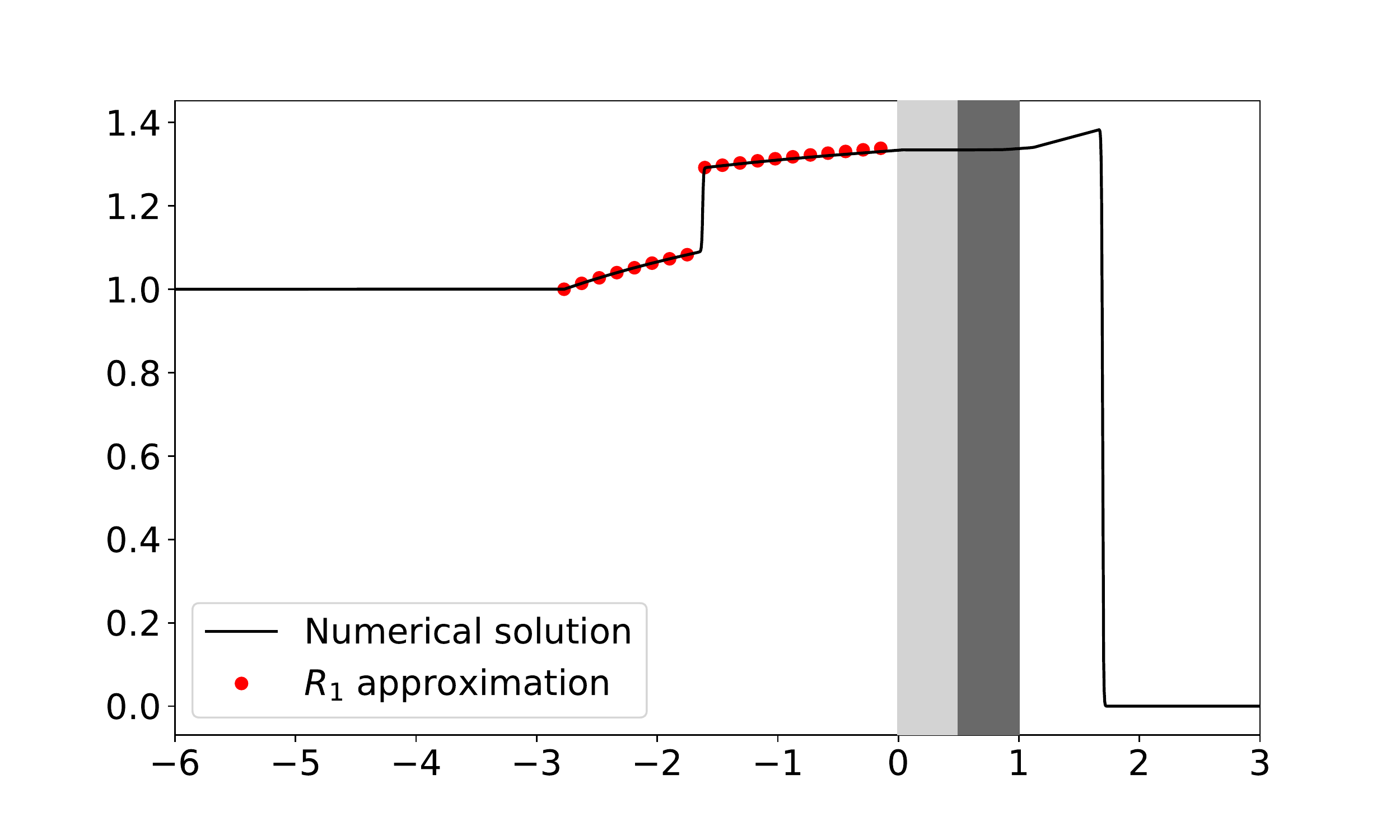}
\caption{Solution at $t=2t_r$ for a medium with piecewise-linear impedance
with $Z_- = 1$ at $x = 0$, $Z^-_{1/2} = 1.2$,
to $Z^+_{1/2} = 1.8$, and $Z_+ = 2$ at $x=1$.
\label{fig:piecewise}}
\end{centering}
\end{figure}

\section{Conclusions}
\new{We have developed a new approximation to the solution of the wave
equation in one space dimension in the
presence of a region of continuously-varying coefficients, by accounting
for all paths along which information can reach a given point.  This can be
extended in a straightforward way to other linear hyperbolic systems in
one dimension. This approximation gives an explicit expression for the
solution as an infinite sum of integrals depending only on the initial data
and the material properties.  We have shown that the series converges to the
solution of the wave equation.  We have also demonstrated that it can give
theoretical insight, by studying the propagation of a Heaviside function and
elucidating the relation between transmission and
reflection coefficients expected in the limiting case of a sharp interface,
and the Green’s law behavior expected for sufficiently smooth transitions in
material properties.

It is natural to ask how the approach described in this paper compares, as a
computational tool, to traditional numerical PDE discretizations like finite
difference or finite volume methods.  It is difficult to give a meaningful
answer to this question, because the approaches are fundamentally different:
\begin{itemize}
    \item Numerical methods begin with discretization: the medium is approximated,
        generally in a piecewise-constant manner and the solution is approximated
        by a representation in some finite basis.  The fundamental approximation in
        our approach is instead truncation of the series \eqref{solution-series};
	discretization is eventually required for numerical evaluation of
	integrals, but this can be done to machine precision if desired.
    \item Numerical discretizations require computation of the solution at a large
        number of points (in $(x,t)$, and this number must be increased in order to obtain
        higher accuracy.  In our approach the solution can be computed at a single
        point to any desired accuracy without computing the solution at other points.
    \item The importance of various factors influencing the size of the error are
        very different in numerical discretizations versus our approach.
        For instance, numerical discretizations have difficulty in accurately capturing
        narrow peaks such as the leading part of the transmitted or reflected waves above.
        In order to capture these, we had to use especially fine grids in Clawpack.
        But the path integral method is most accurate at these points; in fact,
        the first term in the infinite series already gives the exact solution.
\end{itemize}
Because of these differences, it is easy to construct situations in which one
approach or the other is vastly more efficient.  For instance, the method
described here can be more efficient if the solution is needed only at one or a
few points and if the initial data is not smooth.  On the other hand, if the
ratio $\max Z(x)/\min Z(x)$ is large and/or solution values are needed at very
many points, the approach described here may be much more costly than
traditional numerical discretizations.  We have not investigated techniques for
reducing the computational cost or made any detailed comparisons.
}

It is natural to expect that the series \eqref{solution-series} may converge because
paths involving many reflections contribute in successively smaller
amounts to the solution.  Examining \eqref{TandR-integrals}, this viewpoint makes sense only
if $|r(x)|<1$.  However, our examples and analysis show that
\eqref{solution-series} converges quite independently of any such condition.
Theorem \ref{strong-theorem} indicates that in general \eqref{solution-series} converges for a completely
different reason: the number of contributing paths (more precisely,
the volume they occupy in an appropriate space) becomes vanishingly small
as $n \to \infty$.

\section*{Acknowledgments}
We are grateful to Ernst Hairer for a comment that led us to the connection
with zigzag numbers, and to Lajos L\'oczi for reviewing an early draft of this work.
We also thank an anonymous referee for very helpful comments and suggestions.

\appendix

\section{Proof of Lemma \ref{volume-lemma-2}}
\begin{proof}
First, for simplicity take $c(x)=1$ so that the travel time between two
points is just the distance between them.
Because $c(x)=1$, in this case the set $\Paths_n(t)$ (defined in \eqref{paths-t-def}) is just
$$
\Xset_n(t) := \left\{ \bx \in[0,x_+]^n : \text{$\bx$ is an alternating
sequence and } 2\sum_j (-1)^{j+1}x_j \le t  \right\},
$$
the set of alternating sequences with path length at most $t$.
Define the mapping $f: \Real^n \to \Real^n$ by
$$
f_i(\bx) = \begin{cases}
			x_1 & i = 1 \\
			x_{i-1}-x_i & \text{ for $i$ even} \\
			x_i-x_{i-1} & \text{ for $i>1$ odd.}
\end{cases}
$$
This mapping can be represented by a lower-triangular matrix whose diagonal entries are $\pm1$,
so it preserves volume.  Note also that
$$
    \|f(\bx)\|_1 = \traveltime(\bx) - x_n,
$$
and for any alternating sequence $\bx\ge 0$ we have $f(\bx)\ge0$.
Let $\Bset_{n+}^1(t)$ denote the intersection of the $n$-dimensional $L_1$ ball
of radius $t$ with the positive orthant:
$$
 \Bset^1_{n+}(t) = \left\{x \in [0,\infty)^n : \|x\|_1 \le t\right\}.
$$
For any $\bx\in\Xset_n(t)$, we have $f(\bx)\in\Bset_{n+}^1(t)$, so
$$
  \Vol(\Paths_n(t))  =  \Vol(\Xset_n(t)) =  \Vol(f(\Xset_n(t))) \le
         \Vol(\Bset_{n+}^1(t)) = \frac{t^n}{n!}.
$$
The value of the last integral is a classical result due to Dirichlet
\cite[p. 168]{dirichlet1839}.

To extend the proof to arbitrary $c(x)$, let $C = \max_x |c(x)|$.
Then the length of a path emerging at time $t$
is no greater than $Ct$, so $\Paths_n(t) \subset \Xset_n(Ct)$.  Thus
\begin{align*}
\Vol(\Paths_n(t)) \le \Vol(\Xset_n(Ct)) \le \Vol(\Bset^1_{n+}(Ct)) = \frac{(Ct)^n}{n!}.
\end{align*}

\end{proof}

\nocite{HeronDzvonkovskaya2015}
\nocite{SynolakisGreenlaw1991}
\nocite{SimHuang2015}
\nocite{delRazoLeVeque2016}

\bibliographystyle{siamplain}
\bibliography{references}

\begin{thebibliography}{10}

\bibitem{andre1881}
{\sc D.~Andr\'e}, {\em Sur les permutations alternées}, Journal de
  mathématiques pures et appliquées, 3 (1881), pp.~167--184.

\bibitem{bremmer1951wkb}
{\sc H.~Bremmer}, {\em The {WKB} approximation as the first term of a
  geometric-optical series}, Communications on pure and applied mathematics, 4
  (1951), pp.~105--115.

\bibitem{callan2009note}
{\sc D.~Callan}, {\em A note on downup permutations and increasing 0-1-2
  trees},
  \url{http://pages.stat.wisc.edu/~callan/notes/donaghey_bij/donaghey_bij.pdf}.
\newblock Unpublished.

\bibitem{clawpack}
{\sc {Clawpack Development Team}}, {\em Clawpack software}, 2017,
  \url{https://doi.org/10.5281/zenodo.1405834}, \url{http://www.clawpack.org}.
\newblock Version 5.5.0.

\bibitem{delRazoLeVeque2016}
{\sc M.~J. {del Razo} and R.~J. LeVeque}, {\em Computational study of shock
  waves propagating through air-plastic-water interfaces}, Bulletin of the
  Brazilian Mathematical Society, New Series, 47 (2016), pp.~685--700,
  \url{https://doi.org/10.1007/s00574-016-0178-2},
  \url{https://link.springer.com/article/10.1007/s00574-016-0178-2}.

\bibitem{dirichlet1839}
{\sc P.~G.~L. Dirichlet}, {\em Sur une nouvelle m{\'e}thode pour la
  d{\'e}termination des int{\'e}grales multiples}, Journal de math{\'e}matiques
  pures et appliqu{\'e}es,  (1839), pp.~164--168.

\bibitem{JGeorgeMS}
{\sc J.~D. George}, {\em {G}reen's law and the {{Riemann}} problem in layered
  media}, master's thesis, University of Washington, 2018.

\bibitem{shoalingpaper}
{\sc J.~D. George, R.~J. LeVeque, and D.~I. Ketcheson}, {\em Shoaling on steep
  continental slopes: Relating transmission and reflection coefficients to
  {Green's Law}}.
\newblock arXiv:1901.04148, 2019.

\bibitem{gibson2014combinatorics}
{\sc P.~C. Gibson}, {\em The combinatorics of scattering in layered media},
  SIAM Journal on Applied Mathematics, 74 (2014), pp.~919--938.

\bibitem{gibson2019disk}
{\sc P.~C. Gibson}, {\em Disk polynomials and the one-dimensional wave
  equation}, Journal of Approximation Theory,  (2019).

\bibitem{HeronDzvonkovskaya2015}
{\sc M.~Heron and A.~Dzvonkovskaya}, {\em Conceptual view of reflection and
  transmission of a tsunami wave at a step in bathymetry}, in {{OCEANS}} 2015 -
  {{MTS}}/{{IEEE Washington}}, 2015, pp.~1--4,
  \url{https://doi.org/10.23919/OCEANS.2015.7404520}.

\bibitem{innanen2008direct}
{\sc K.~A. Innanen}, {\em A direct nonlinear inversion of primary wave data
  reflecting from extended, heterogeneous media}, Inverse Problems, 24 (2008),
  p.~035021.

\bibitem{innanen2009born}
{\sc K.~A. Innanen}, {\em Born series forward modelling of seismic primary and
  multiple reflections: an inverse scattering shortcut}, Geophysical Journal
  International, 177 (2009), pp.~1197--1204.

\bibitem{landauer1951reflections}
{\sc R.~Landauer}, {\em Reflections in one-dimensional wave mechanics},
  Physical Review, 82 (1951), p.~80.

\bibitem{lax2006hyperbolic}
{\sc P.~D. Lax}, {\em Hyperbolic partial differential equations}, vol.~14,
  American Mathematical Soc., 2006.

\bibitem{zigzag_oeis}
{\sc {OEIS Foundation Inc.}}, {\em The on-line encyclopedia of integer
  sequences}.
\newblock \url{https://oeis.org/A000111}, 2019.

\bibitem{schelkunoff1951remarks}
{\sc S.~Schelkunoff}, {\em Remarks concerning wave propagation in stratified
  media}, Communications on Pure and Applied Mathematics, 4 (1951),
  pp.~117--128.

\bibitem{SimHuang2015}
{\sc S.~Y. Sim and Z.~Huang}, {\em An experimental study of tsunami
  amplification by a coastal cliff}, Journal of Coastal Research,  (2015),
  pp.~611--618, \url{https://doi.org/10.2112/JCOASTRES-D-15-00032.1}.

\bibitem{SynolakisGreenlaw1991}
{\sc C.~E. Synolakis}, {\em {G}reen's law and the evolution of solitary waves},
  Physics of Fluids A: Fluid Dynamics, 3 (1991), pp.~490--491,
  \url{https://doi.org/10.1063/1.858107}.

\bibitem{weglein2003inverse}
{\sc A.~B. Weglein, F.~V. Ara{\'u}jo, P.~M. Carvalho, R.~H. Stolt, K.~H.
  Matson, R.~T. Coates, D.~Corrigan, D.~J. Foster, S.~A. Shaw, and H.~Zhang},
  {\em Inverse scattering series and seismic exploration}, Inverse problems, 19
  (2003), p.~R27.

\end{thebibliography}
\end{document}